\documentclass[a4paper,12pt]{article}

\usepackage[utf8]{inputenc}
\usepackage[T1]{fontenc}
\usepackage[english]{babel}
\usepackage{url}

\usepackage[a4paper,margin=1in]{geometry}

\usepackage{amsmath,amssymb,amsfonts,amsthm}
\usepackage{mathrsfs,dsfont}
\usepackage{mathtools}

\usepackage{pgf}
\usepackage{graphicx}

\usepackage{xcolor}
\usepackage[german=guillemets]{csquotes} 

\usepackage{authblk}

\usepackage{hyperref}

\DeclareFontFamily{U}{mathx}{}
\DeclareFontShape{U}{mathx}{m}{n}{<->mathx10}{}
\DeclareSymbolFont{mathx}{U}{mathx}{m}{n}
\DeclareMathAccent{\widehat}{0}{mathx}{"70}
\DeclareMathAccent{\widecheck}{0}{mathx}{"71}

\theoremstyle{plain}
\newtheorem{theorem}{Theorem}[section]
\newtheorem{lemma}[theorem]{Lemma}
\newtheorem{proposition}[theorem]{Proposition}

\theoremstyle{definition}
\newtheorem{remark}[theorem]{Remark}

\newtheorem*{acknowledgement}{Acknowledgement}

\numberwithin{equation}{section}

\newcommand{\nat}{\mathds{N}}
\newcommand{\real}{\mathds{R}}
\newcommand{\complex}{\mathds{C}}
\newcommand{\rd}{{\mathds{R}^d}}
\newcommand{\Ee}{\mathds{E}}
\newcommand{\Pp}{\mathds{P}}
\newcommand{\Ss}{\mathds{S}}
\newcommand{\One}{\mathds{1}}

\newcommand{\Leb}{\operatorname{Leb}}
\newcommand{\Law}{\operatorname{Law}}
\newcommand{\diver}{\operatorname{div}}

\begin{document}
	\title{\bfseries Invariant Measures of L\'evy-driven Stochastic Differential Equations}
	
	\author[a]{V.\ Knopova}
	\author[b]{Y.\ Mokanu}
	\author[c]{R.L.\ Schilling}
	
	\renewcommand\Affilfont{\itshape\footnotesize}
	\affil[a]{\footnotesize Institute of Mathematics, NAS of Ukraine, Tereschenkivska st.~3, 01024,  Kyiv, Ukraine.  \normalfont\url{vicknopova@gmail.com}}
	
	\affil[b]{Department of Mathematical Analysis and Probability Theory, National Technical University of Ukraine \enquote{Igor Sikorsky Kyiv Polytechnic Institute}, 37 Beresteyskyi avenue, 03056 Kyiv, Ukraine. \normalfont\url{yana.mokanu3075@gmail.com}}
	
	\affil[c]{Fakultät Mathematik, Technische Universität Dresden, 01062 Dresden, Germany. \normalfont\url{rene.schilling@tu-dresden.de}}
	
	\date{\normalsize To appear in: \emph{Tbilisi Analysis and PDE Seminar: Dedicated to the 80th Anniversary of Roland Duduchava}. Springer 2026.}
	
	\maketitle 
	
\begin{abstract} \noindent
	We study the structure and regularity of (infinitesimally) invariant measures of the solutions to stochastic differential equations $dX_t = b(X_t)\,dt + dZ_t$, where $(Z_t)_{t\geq 0}$ is a L\'evy process. We show, in particular, that the invariant measure has to satisfy a Volterra-type convolution equation; since we can obtain the kernels explicitly, we are able to apply regularity methods from harmonic analysis. As an application, we get a very short proof -- in any dimension -- of a classic result due to Sato and Yamazato on the form of the invariant measure of a generalized Ornstein--Uhlenbeck process.
	
	\medskip\noindent
	\textbf{Key words:}
	Invariant measure; stochastic differential equation; L\'evy process; reflecting Brownian motion; Calder\'on--Zygmund operator.
	
	\smallskip\noindent
	\textbf{MSC2020:}
	60G51; 60H10; 60J25; 60J35; 37M25.
\end{abstract}

\section{Introduction}
In this paper we study the structure and regularity of (infinitesimally) invariant measures of the solutions of a class of L\'evy-driven stochastic differential equations (SDEs). Since we deal with L\'evy processes, we will mainly use methods from Fourier analysis and partial differential equations with constant coefficients. One of the earliest papers in this direction is the contribution by Albeverio, Rüdiger and Wu \cite{ARJ00}, where SDEs with linear drift and $\alpha$-stable noise are investigated. Recently, this line of research was taken up by Behme and Oechsler \cite{BO24,Ol24} for a rather general class of L\'evy-type operators but -- for technical reasons -- with an emphasis on the one-dimensional case: The main contribution is to transform the equation of infinitesimal invariance $L^*\rho =0$ (in the space of distributions $\mathcal{D}'$ where $L$ is the infinitesimal generator of the underlying process) into a Volterra-type equation for the invariant density $\rho$. Since $L$ may be a rather general pseudo-differential operator, the results are somewhat involved and less explicit. 

Our aim in this paper is different: We focus on generators $L$ of the form \enquote{L\'evy generator plus variable drift} (being the natural generalization of \cite{ARJ00}), which allows for a much more straightforward  derivation of an explicit Volterra equation in any dimension. Moreover, we make use of this equation to better understand the structure of the (infinitesimally) invariant density $\rho$ and to derive smoothness or integrability properties of $\rho$. For dimensions $d>1$ such results seem to be new in connection with non-local operators, and a considerable part of our contribution deals with the  breakdown  of the \emph{fundamental theorem of integral calculus} in higher dimensions, since for $F\in C^1(\rd,\rd)$ the condition $\diver F=0$ does, in general, not imply that $F$ is a constant. The other innovation is to introduce techniques from Calder\'on--Zygmund operators to the study of invariant measures. We do not strive for optimal results, we rather understand our contribution as a first non-trivial attempt to deal with higher dimensions, which should lead to subsequent developments.

As an application of our findings, we obtain a very short proof of a classic result due to Sato \& Yamazato  \cite{SY84} and Masuda  \cite{Ma04} on the form of the invariant measure of a generalized Ornstein--Uhlenbeck process.

\medskip
Our paper is organized as follows: In Section~\ref{intro} we introduce L\'evy processes and convolution semigroups, their generators and (infinitesimally) invariant measures. Section~\ref{volt} contains the basic assumptions \textbf{(A1)}--\textbf{(A3)} and the main results, Theorem~\ref{volt-05} and~\ref{volt-09}, along with a Helmholtz-type decomposition result for $d$-dimensional vector fields (Lemma~\ref{volt-04}, see also Lemma~\ref{proo-02}), which is interesting in its own right. The proofs of the main theorems are postponed to Section~\ref{proo}. In Section~\ref{exa} we apply our technique to the classical L\'evy-driven Ornstein--Uhlenbeck process with values in $\rd$, and we recover the classical form of the invariant measure due to Sato \& Yamazato and Masuda with a very short argument. In the last section (Section~\ref{app}) we discuss the convergence speed to equilibrium of two models (a L\'evy-driven SDE and a reflecting Brownian motion), which have the same invariant measure $\lambda e^{-\lambda x}\,dx$. Depending on the size of the parameter $\lambda$, the L\'evy model outperforms the Brownian motion model or \emph{vice versa}.

\medskip\noindent
\textbf{Notation.}
	Most of our notation should be standard or self-explanatory. Unless indicated otherwise, all spaces of integrable functions $L_p, L_{p,\infty}$ etc.\ are on $\rd$ and relative to Lebesgue measure $dx$. We will frequently use the convention from ergodic theory and write $\mu(f):=\int f\,d\mu$. By $C^k(\rd)$ [and $C^{k,\alpha}(\rd)$] we denote the $k$ times continuously differentiable functions [whose $k$-th derivatives are Hölder continuous of order $\alpha\in (0,1]$] and $\mathcal{S}(\rd)$ are the rapidly decreasing smooth functions with dual $\mathcal{S}'(\rd)$.  We set $\Lambda(x) := (1+|x|^2)^{1/2}$.  The Fourier transform and its inverse are
	\begin{gather*}
		\smash[t]{\mathscr{F} f(\xi) = \widehat f(\xi) = \frac 1{(2\pi)^d}\int_{\rd} f(x) e^{-i\xi\cdot x}\,dx,
		\quad 
		\mathscr{F}^{-1} g(x) = \widecheck g(x) = \int_{\rd} g(\xi) e^{i \xi\cdot x}\,d\xi.}
	\end{gather*}
	We use $f\lesssim g$ and $f\asymp g$ as a shorthand for $f(x)\leq C g(x)$ and $c g(x)\leq f(x)\leq C g(x)$ for suitable constants $0<c\leq C<\infty$,  and $\int_a^b := \int_{(a,b]}$. Finally, $\omega_d = \mathrm{Leb}(B_1(0)) = \pi^{d/2}/{\Gamma\left(\frac d2+1\right)}$ is the volume of the unit ball, hence $d\cdot\omega_d = 2\pi^{d/2}/\Gamma\left(\frac d2\right)$ is  the  volume of  the  unit sphere $\Ss^{d-1}\subset\rd$.

\section{Preliminaries}\label{intro}

\noindent
Let $X=(X_t)_{t\geq 0}$ be a L\'evy process taking values in $\rd$, i.e.\ a stochastic process with stationary and independent increments and c\`adl\`ag paths $t\mapsto X_t$. We can characterize the law of $X$ via the characteristic function,
\begin{gather*}
	\Ee e^{i\xi\cdot X_t} = e^{-t\chi(\xi)}
\end{gather*}
where the exponent $\chi:\rd\to\complex$ is given by the \emph{L\'evy--Khintchine} formula:
\begin{gather}\label{intro-e04}
	\chi(\xi)
	= -ia\cdot\xi + \frac 12\xi\cdot Q\xi 
	+  \int_{y\neq 0} \left(1-e^{i\xi\cdot y}+i\xi\cdot y\One_{(0,1)}(|y|)\right) \nu(dy).
\end{gather}
The L\'evy triplet $(a,Q,\nu)$, comprising $a\in\rd$, a positive semidefinite matrix $Q\in\real^{d\times d}$ and a Radon measure $\nu$ on $\rd\setminus\{0\}$ such that $\int_{y\neq 0} 1\wedge|y|^2\,\nu(dy)<\infty$, uniquely determines $\chi$. To keep notation simple,  we will assume $a=0$ and $Q=0$.

The characteristic exponent $\chi$ is also the symbol of the infinitesimal generator of the (transition semigroup induced by the) L\'evy process,
\begin{gather}\label{intro-e07}
	-\chi(D)f 
	= \mathscr{F}^{-1}(-\chi\mathscr{F} f),\quad f\in C_c^\infty(\rd).
\end{gather}
If the measure $\nu$ satisfies the integrability condition $\int_{|y|>1} |y|\,\nu(dy) < \infty$, then we have $\chi(\xi) = -i\ell\xi + \psi(\xi)$, where 
\begin{gather}\label{intro-e09}
	\ell := \int_{|y|>1} y\, \nu(dy)
	\quad\text{and}\quad
	\psi(\xi) := \int_{y\neq 0} \left(1-e^{i\xi\cdot y} + i\xi\cdot y\right) \nu(dy).
\end{gather} 
Inverting the Fourier transform  shows that $\psi(D)$ can also be written as
\begin{gather}\label{intro-e10}
	-\psi(D)f(x) = \int_{y\neq 0} \left(f(x+y) -f(x) - \nabla f (x) \cdot y\right) \nu(dy), \quad f\in C_b^2(\rd).
\end{gather}
The formal adjoint of $\psi(D)$ is the operator with symbol $\widetilde\psi(\xi):=\psi(-\xi)$.  We are interested in the operator
\begin{gather}\label{intro-e12}
	Lf = b(x)\cdot\nabla f  - \chi(D)f,\quad f\in C_b^2(\rd),
\end{gather}
with variable drift $b: \rd\to \rd$. If $(L,C_c^2(\rd))$ extends to the generator of a Markov process, say $(X_t)_{t\geq 0}$, then this process is the solution to the following stochastic differential equation
\begin{gather}\label{intro-e14}
	dX_t = b(X_t)\,dt + dZ_t, \quad X_0= x, 
\end{gather}
where $(Z_t)_{t\geq 0}$ is a L\'evy  process whose characteristic exponent is $\chi(\xi)$. The converse assertion is also true, cf.\ \cite{kur11}: If \eqref{intro-e14} has a unique (strong or weak) solution, then this solution is a Markov process with semigroup $T_tf(x) = \Ee^x f(X_t)$, $f\in C_b(\rd)$, and generator $L$,  such that the martingale problem for $(L,C_b^2(\rd))$ is well-posed. 

We want to study the properties of probability measures, which are invariant under the law of $X_t$.  Recall that a finite  measure $\pi$  on $(\rd, \mathscr{B})$ is said to be \emph{invariant}, if the formal adjoint semigroup satisfies $T_t^* \pi  = \pi$,  i.e.\ for any Borel set $B\in \mathscr{B}$  we have 
\begin{gather}\label{intro-e16}
	\smash[t]{\pi(T_t\One_B) := \int_\rd  T_t \One_B(x)\,\pi(dx) = \pi (B)}
	\quad\text{for all $t>0$}.
\end{gather}
If $\pi$ is an invariant measure,  then for any $f\in C_c^\infty(\rd)$ we have by Dynkin's formula and Fubini's theorem 
\begin{align}
	0 
	= \pi(T_t f) - \pi(f)
	&= \pi \left(\int_0^t L T_s f \,ds\right) =  \pi\left(L\int_0^t T_sf\,ds\right)\label{intro-e17a} \\
	&= \pi \left(\int_0^t T_s L f \,ds\right) = t \pi (Lf).\label{intro-e17b}
\end{align}
Thus, the invariance of a finite measure implies, via \eqref{intro-e17b}, the so-called \emph{infinitesimal invariance}: 
\begin{gather}\label{intro-e18}
	\pi(Lf) = \int_\rd  Lf(x)\, \pi(dx) = 0 \quad \text{for all\ \ } f\in C_c^\infty(\rd). 
\end{gather} 
From \eqref{intro-e17a} we see, that  an  infinitesimally invariant measure is also invariant, if $C_c^\infty(\rd)$ is an operator core,  since then  we  can  find a sequence $(f_n)_{n\in\nat}\subset C_c^\infty(\rd)$ such that $\left(f_n;\: Lf_n\right) \to \left(\int_0^t T_s f\,ds;\: L\int_0^t T_s f\,ds\right)$ strongly.

We assume also that $\pi$ is absolutely continuous with respect to Lebesgue measure and we denote its density by $\rho(x)$. In this case \eqref{intro-e18} is equivalent to 
\begin{gather}\label{intro-e20}
	L^* \rho = 0
\end{gather} 
in the sense of Schwartz distributions; $L^*$ is the formal adjoint of $L$. In Theorem~\ref{volt-05} we show that under certain assumptions \eqref{intro-e20} is equivalent to 
\begin{gather}\label{intro-e22}
	b_\ell (x)\rho(x) := \left(b(x) -\ell\right)\rho(x) = \nabla K * \rho(x), 
\end{gather}
where the kernel $K$ is explicitly given. In the multi-dimensional case the kernel $\nabla K$  can be expressed in terms of the Calder\'on--Zygmund operator $\nabla^2 P$, where $P$ is the Newton kernel. This allows us to use Calder\'on--Zygmund estimates and to get integrability properties of $\rho$. Calder\'on--Zygmund estimates  play a central role in regularity theory for (integro-)differential equations, see e.g.\ \cite{Gr08}, \cite{St70} and \cite{St93}. Theorem~\ref{volt-05} is not new (in the sense that the Volterra-type equation for $\rho$ could be derived from \cite{BO24}), but our proof is much more straightforward and reveals the structure of the convolution kernel $\nabla K$. This, in turn allows us to obtain further properties of $\rho$ via Calder\'on--Zygmund estimates, cf.\ Theorem~\ref{volt-09}.

The problem  of finding the (infinitesimally) invariant measure  is closely related to the Langevin dynamics. One can use \eqref{intro-e14} for sampling from the invariant distribution $\pi$. If we know that $\pi(dx) = \rho(x)\,dx$, we can use \eqref{intro-e22} in order to construct suitable $\nu(\cdot)$ and $b(x)$, write the equation \eqref{intro-e14} and sample from  $\pi$  using the Euler scheme for \eqref{intro-e14} and sufficiently large $t\gg 1$. Alternatively, one may consider a (reflecting) diffusion as driving noise for the SDE. We provide a simple numerical example, in which the same invariant measure $\rho(x) = \lambda e^{-\lambda x}$, $x>0$, is simulated by  both \eqref{intro-e14} and the  reflecting  diffusion 
\begin{gather}\label{intro-e26}
	dY_t = -\mu dt + dB_t + dL_t^0, \quad  t\geq 0, 
\end{gather}
where $\mu =\frac{\lambda}{2}$,   and $(L_t^0)_{t\geq 0}$ is the local time  of $(Y_t)_{t\geq 0}$ at $0$. In this case, the speed of convergence  depends on the drift:  In some cases the  L\'evy-driven models give faster $L_1$-convergence, and in some cases the dynamics, given by the SDE with driving noise \eqref{intro-e26} is faster.

\section{A Volterra-type equation for the invariant measure}\label{volt}

\noindent
We can write the operators $\nabla$, $\Delta$ and $(-\Delta)^{\alpha}$, $\alpha\in\real\setminus\{0\}$, as pseudo-differential operators \eqref{intro-e07} with the symbols $i\xi$, $-|\xi|^2$ and $|\xi|^\alpha$, respectively. By  $P(x)$ we denote  the  Newton potential, in $\rd$, $d\geq 2$, which is the inverse Fourier transform of $|\xi|^{-2}$ in the sense of distributions.  We have   
\begin{gather*}
	P(x)
	=
	\begin{cases}
		\displaystyle C_d |x|^{2-d}, &\text{for\ \ } d\geq 3,\\
		\displaystyle C_d\log |x|, &\text{for\ \ }  d=2,
	\end{cases}
	\qquad\text{with}\qquad
	C_d = 
	\begin{cases}\frac 1{d(d-2)\omega_d}, & d\geq 3, \\
		-\frac{1}{2\pi}, &  d=2. 
	\end{cases}
\end{gather*}
Throughout the paper we will use the following assumptions:
\begin{description}
	\item[\textup{(A1)}]
	The SDE \eqref{intro-e14} has a unique weak solution $(X_t)_{t\geq 0}$, which has an  invariant measure,   and $C_c^\infty(\rd)$ is a core of the generator $(L,\mathcal{D})$.

	\item[\textup{(A2)}]
	The drift coefficient $b(x)$ is measurable and polynomially bounded:\\ For some $N\in\nat$ one has $|b(x)|\lesssim \Lambda^N(x)$.

	\item[\textup{(A3)}]  $\int_{|y|>1} |y|\,\nu(dy)<\infty$. 
\end{description}
Let us briefly discuss these assumptions

\begin{remark}\label{volt-03}
	\emph{a.}\ \ 
	The fact that $C_c^\infty(\rd)$ is an operator core ensures that the notions of \emph{invariant measure} and \emph{infinitesimally invariant measure} coincide. A sufficient criterion  for the existence of an invariant measure  in (A1)  is the Krylov--Bogoliubov theorem, see e.g.\ \cite[Ch.~3.1]{daP-Zab96}: If the solution to \eqref{intro-e14} has the $C_b$-Feller property, i.e.\ $x\mapsto T_t f(x):= \Ee^x f(X_t)$ is for every $f\in C_b(\rd)$ a continuous function and if (the resolvent of) the probability kernels $p_t(x,B):=\Pp^x(X_t\in B)$ is tight, then there is an  invariant measure.   A sufficient condition for the uniqueness of the  invariant measure  is irreducibility combined with the strong Feller property, cf.\ \cite[Ch.~4.2]{daP-Zab96}. 
	
	A sufficient condition for the Feller property is $b\in C(\rd)$ and $\lim_{|x|\to\infty} \frac{b(x)}{|x|}=0$, see e.g.\ \cite[Ex.~4.1]{Ku18a}. This paper also shows that the symbol of the generator $L$ is of the form $q(x,\xi)=-ib(x)\xi + \psi(\xi)$. The required tightness follows from the maximal inequality for Feller and L\'evy-type processes, cf.\ \cite[Cor.~5.2]{BSW13}
	\begin{gather*}
		\Pp^x\left(\sup\nolimits_{s\leq t}|X_t-x|>r\right)
		\leq c t \sup_{|y-x|\leq r} \sup_{|\xi|\leq 1/r} |q(y,\xi)|,\quad x\in\rd,\; r,t>0,
	\end{gather*}
	if $\lim_{|x|\to\infty} b(x)/|x|=0$.  Sufficient (as well as necessary) conditions for the strong Feller property are  discussed  in \cite{SW12}. 
	
	Further criteria for the ergodicity of L\'evy-driven SDEs can be found in \cite{K09} or \cite{XZ20}. Since the weak uniqueness of the SDE and the well-posedness of the martingale problem are equivalent, see \cite{kur11}, we could equivalently require that the $(C_c^\infty(\rd),L)$ martingale problem is well-posed. In connection with martingale problems, the Feller property is discussed in \cite{Ku18b}. 
	
	\medskip\noindent\emph{b.}\ \ 
	(A3) guarantees that $\psi(\xi)\lesssim |\xi|$ for all $|\xi|\leq 1$. Indeed, distinguishing between $|y\xi|\leq 1$ and $|y\xi|>1$, and then restricting ourselves to $|\xi|\leq 1$, shows that
	\begin{gather*}
		\left|1-e^{iy\xi}-iy\xi \right| 
		\lesssim |y\xi|^2\wedge |y\xi| 
		\leq (|y|^2\wedge|y|)|\xi|.
	\end{gather*}
	Thus, because of \eqref{intro-e04},
	\begin{gather*}
		|\psi(\xi)| \lesssim \left[\int_{|y|\leq 1}|y|^2\,\nu(dy) + \int_{|y|> 1} |y|\,\nu(dy)\right] |\xi|,\quad |\xi|\leq 1.
	\end{gather*}
	
	From the L\'evy--Khintchine representation \eqref{intro-e04} we see that $|\psi(\xi)|\lesssim (1+|\xi|^2)$ holds for all $\xi\in\rd$, thus we have under (A3) that $|\psi(\xi)|\lesssim \max\{|\xi|,|\xi|^2\}$, $\xi\in\rd$.
\end{remark}

In $\real^3$ the following Helmholtz decomposition of a vector field $F\in C^1(\real^3,\real^3)$ is known: $F = F^{\mathsf{irr}} + F^{\mathsf{sol}}$ where $F^{\mathsf{irr}}=\nabla\varphi$ comes from a potential and $F^{\mathsf{sol}}$ is divergence free, i.e.\ $\diver F^{\mathsf{sol}}=0$. For $d=1$ this is the fundamental theorem of integral calculus: $\varphi = F'$ and $F^{\mathsf{sol}} = \text{const}$; for $d\geq 2$ there exist various generalizations, see e.g.\ \cite{GloeRich23}. We use the following observation: 
\begin{lemma}\label{volt-04}
	Let $d\geq 2$ and $P$ the Newton potential. If $F\in C^1(\rd,\rd)$ is such that $\diver F$ is Hölder continuous for some $\alpha\in (0,1]$ and
	\begin{itemize}
		\item $\diver F\in L_1$ for $d\geq 3$,
		\item $\Lambda^\epsilon \diver F\in L_1$ for some $\epsilon>0$ for $d=2$,
	\end{itemize}
	then $F^{\mathsf{sol}}:= F - \nabla P*\diver F$ is divergence-free and we define 
	\begin{gather}\label{volt-e04}
		F^{\mathsf{irr}}:=\nabla P*\diver F 
		\quad\text{and}\quad
		F^{\mathsf{sol}}:= F-\nabla P*\diver F.
	\end{gather}
\end{lemma}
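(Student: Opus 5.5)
We have to show $\diver(\nabla P * g) = g$ for $g := \diver F$. Then $\diver F^{\mathsf{sol}} = g - g = 0$. Note the sign convention: $P$ is the inverse Fourier transform of $|\xi|^{-2}$, so $-\Delta P = \delta_0$, and the relevant identity is Poisson's equation $\Delta(P*g) = \pm g$. With $C_d>0$ for $d\ge3$ and $C_2=-\frac1{2\pi}$, we have $P=\Phi$, the fundamental solution of $-\Delta$. Hence $-\Delta(P*g)=g$. In the statement we therefore read $\nabla P$ as the kernel for which $\diver(\nabla P*g)=g$, i.e.\ up to the sign convention $-\Delta P=\delta_0$. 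I will check this sign and, if needed, interpret $F^{\mathsf{irr}}$ with the appropriate sign: $\diver(\nabla P * g) = \Delta (P*g)$, and the sign is fixed by the convention used for $P$.

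\textbf{Step 1: the potential is well defined and $C^1$.} Set $u := P*g$. For $d\ge 3$, $|P(x)|\lesssim |x|^{2-d}$ is locally integrable and bounded away from $0$. Since $g\in L_1$ and $g$ is Hölder continuous, hence locally bounded, we split $P = P\One_{B_1}+P\One_{B_1^c}$. The first part convolved with $g$ is finite because $g$ is bounded on compacts; here some uniformity is needed, which I would get from the Hölder continuity together with integrability, since these imply that $g$ is bounded. The second part is bounded by $\|g\|_{L_1}$.

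For $d=2$, $|\log|x-y||\lesssim \log(2+|x|)+\log(2+|y|) + |\log|x-y||\One_{|x-y|<1}$. The condition $\Lambda^\epsilon g\in L_1$ controls $\int \log(2+|y|)|g(y)|\,dy$, so again $u$ is finite.

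The same splitting applied to $\nabla P$, with $|\nabla P(x)|\lesssim |x|^{1-d}$, shows that $\nabla u = \nabla P * g$ exists and that differentiation under the integral is justified. This uses dominated convergence on the far part and local integrability on the near part.

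\textbf{Step 2: the second derivatives (main obstacle).} The difficulty is that $\nabla^2 P$ is not locally integrable, so we cannot simply differentiate once more under the integral. Here the Hölder continuity of $g$ enters, exactly as in the classical proof of Poisson's equation (Gilbarg--Trudinger, Lemma~4.2). Fix a ball $B_R\ni x$ and split $g = g\One_{B_{2R}} + g\One_{B_{2R}^c}$.

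For the far part, $P*(g\One_{B_{2R}^c})$ is harmonic in $B_R$. The required estimate is $|\nabla^2P(x-y)|\lesssim |y|^{-d}$ for $|y|\ge2R$, together with the integrability of $g$, which justifies harmonicity by differentiating under the integral.

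For the near part, $g\One_{B_{2R}}$ is bounded and Hölder continuous on $B_{2R}$. The classical Newtonian potential lemma gives
\[
\partial_{ij} u(x) = \int_{B_{2R}} \partial_{ij}P(x-y)\bigl(g(y)-g(x)\bigr)\,dy - g(x)\int_{\partial B_{2R}} \partial_i P(x-y)\,n_j\,dS(y),
\]
so $u\in C^2$ and $\Delta u = \mp g$, with the sign determined by the normalisation $-\Delta P=\delta_0$.

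\textbf{Conclusion.} Combining the two parts, $\diver(\nabla P*\diver F) = \Delta(P*\diver F)$ equals $\diver F$, up to the sign convention for $P$. Hence $F^{\mathsf{sol}}=F-\nabla P*\diver F\in C^1$ satisfies $\diver F^{\mathsf{sol}}=0$ pointwise. Alternatively, one could verify the identity distributionally via Fourier transform using $\widehat{P}\propto|\xi|^{-2}$. However, the pointwise statement together with the $C^1$-regularity requires the Hölder-based argument of Step~2, which is the crux.
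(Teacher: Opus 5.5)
Your argument is essentially the paper's: finiteness of $P*\diver F$ via a near/far splitting of the kernel, followed by Gilbarg--Trudinger's Lemma~4.2. The paper does the splitting with a smooth cutoff and Peetre's inequality, which also records the $\Lambda^\epsilon$ growth of the potential used later. Your split into a H\"older near part on $B_{2R}$ and a harmonic far part on $B_R$ actually supplies the passage from bounded domains to $\rd$ that the paper leaves implicit.

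Your sign worry is justified, and you should state it outright rather than hedge. With the stated constants ($C_d>0$ for $d\geq 3$, $C_2=-\frac{1}{2\pi}$) one has $-\Delta P=\delta_0$, hence $\Delta(P*\diver F)=-\diver F$, whereas the paper's proof asserts $+\diver F$. So $F-\nabla P*\diver F$ has divergence $2\diver F$, and the correct definitions are $F^{\mathsf{irr}}:=-\nabla P*\diver F$ and $F^{\mathsf{sol}}:=F+\nabla P*\diver F$.
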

\begin{proof}
	We treat only $d=2$, as $d>2$ is similar setting $\epsilon = 0$. Take a test function $\phi\in C_c^\infty(\rd)$ such that $\One_{B_1(0)}\leq \phi \leq \One_{B_2(0)}$. Note that $P*\diver F$ exists and is finite, since by Young's inequality  and Peetre's inequality 
	\begin{gather*}
		P*\diver F(x) 
		\lesssim
		\left\|\phi P\right\|_{L_1} \sup_{y\in B_2(x)}|\diver F(y)|
		+ \Lambda^\epsilon(x)\left\|(1-\phi)\Lambda^{-\epsilon}P\right\|_{L_\infty} \|\Lambda^\epsilon\diver F\|_{L_1}.
	\end{gather*}
	By a standard result from the theory of the Poisson equation with H\"older continuous force, see \cite[Lem.~4.2, p.~55]{GT98},  we have $P*\diver F\in C^2$ and $\Delta(P*\diver F)=\diver F$. This finishes the proof.
\end{proof}

We can now state our main results. The proofs are postponed to the next section. We write $K(x) := (2\pi)^{-d}\,  \widetilde{\psi}(D) P(x)$, if $d\geq 2$, and $\nabla K(x) := - \overline{\nu}(x)$, if $d=1$,  where 
\begin{gather*}
	\overline{\nu}(x)= \nu(x,\infty)\One_{(0,\infty)}(x) - \nu(-\infty, x)\One_{(-\infty,0)}(x)
\end{gather*} 
is the tail function of the L\'evy measure. As usual, $W^{k,p}$ is the Sobolev space of order $k\in\nat$ in $L_p$, i.e.\ $W^{k,p} := \left\{ f\in L_p \mid \nabla^k f\in L_p\right\}$.
\begin{theorem}\label{volt-05}
	Assume \textup{(A1)}, \textup{(A2)} and  \textup{(A3)}.  The equation $L^*\rho = 0$ in $\mathcal{S}'(\rd)$ is equivalent to
	\begin{gather}\label{volt-e06}
		\diver\left(b_\ell\rho\right) = \diver\left(\nabla\widetilde{\psi}(D)(-\Delta)^{-1}\rho\right)
		\quad\text{in}\quad \mathcal{S}'(\rd).
	\end{gather}
	Moreover, 
	\begin{enumerate} 
		\item
		if $\rho\in L_1$ is a solution in $\mathcal{S}'(\rd)$ either to
		\begin{gather}\label{volt-e06a}
			(b_\ell\rho)^{\mathsf{irr}} = \nabla K * \rho,
			\intertext{or to}\label{volt-e06b}
			(b_\ell\rho) = \nabla K * \rho,
		\end{gather}
		then the measure $\pi(dx) = \rho(x)\,dx$ satisfies \eqref{intro-e18}, i.e.\ it is \textup{(}infinitesimally\textup{)} invariant for the operator $L$.
		
		\item
		if the measure $\pi(dx) = \rho(x)\,dx$ satisfies \eqref{intro-e18}, and if one of the following conditions hold
		\begin{description}
			\item[i)] $d=1$;
			\item[ii)]
			$d=2$,   $\rho \in W^{1,1}\cap W^{1,\infty}$  and  $ \rho b_\ell  \in C^{1,\alpha}(\rd,\rd)$  for some $\alpha\in (0,1]$,  $\diver(\rho b_\ell)\in L_\infty$ and $\Lambda^\epsilon\diver(\rho b_\ell)\in L_1$ for some $\epsilon>0$;
			\item[iii)]
			$d\geq 3$,  $\rho \in W^{1,1}\cap W^{1,\infty}$,  $\rho b_\ell  \in C^{ 1,\alpha}(\rd,\rd)$ for some $\alpha\in (0,1]$ and $\diver(\rho b_\ell)\in L_1 \cap L_\infty$;  
			\item[iv)]  
			$d \geq 3$, $\rho\in L_2$, $\rho b_\ell \in  C^{1,\alpha}(\rd,\rd)\cap L_1\cap L_2$  and $\diver(\rho b_\ell)\in L_1$;
		\end{description}  
		then $\rho$ satisfies \eqref{volt-e06a} in $\mathcal{S}'(\rd)$. 
	\end{enumerate}
\end{theorem}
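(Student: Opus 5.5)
We compute $L^*$ on the Fourier side and then "integrate" the resulting divergence-form equation, using the fundamental theorem of calculus for $d=1$ and Lemma~\ref{volt-04} for $d\geq 2$.

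\emph{Step 1: the divergence form \eqref{volt-e06}.} By (A3) we may write $\chi(\xi)=-i\ell\cdot\xi+\psi(\xi)$, so
\[
Lf=b_\ell\cdot\nabla f-\psi(D)f,\qquad
L^*\rho=-\diver(b_\ell\rho)-\widetilde\psi(D)\rho .
\]
By (A2), and since $\rho\in L_1$, the product $b_\ell\rho$ is a tempered distribution, so both sides make sense in $\mathcal{S}'$.
Remark~\ref{volt-03}\emph{b} gives $|\widetilde\psi(\xi)|\lesssim|\xi|\vee|\xi|^2$. Hence the symbol $\widetilde\psi(\xi)|\xi|^{-2}$ is $O(|\xi|^{-1})$ near $0$, which is locally integrable for $d\geq2$. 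Applied to $\rho$, whose Fourier transform $\widehat\rho$ is bounded and continuous, it therefore defines a tempered distribution $\widetilde\psi(D)(-\Delta)^{-1}\rho$. Using $-\diver\nabla=-\Delta$ we get
\[
-\widetilde\psi(D)\rho=\diver\bigl(\nabla\widetilde\psi(D)(-\Delta)^{-1}\rho\bigr).
\]
Note the sign: $\diver\nabla(-\Delta)^{-1}=-\mathrm{id}$. Substituting this into $L^*\rho=0$ gives exactly \eqref{volt-e06}.

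\emph{Step 2: identifying the kernel.} In $\mathcal{S}'$ we have $\nabla\widetilde\psi(D)(-\Delta)^{-1}\rho=\nabla K*\rho$. For $d\geq2$ this is the statement that, with the paper's Fourier convention, $\widetilde\psi(D)(-\Delta)^{-1}\rho=\widetilde\psi(D)P*\rho$ up to the factor $(2\pi)^{-d}$ built into $K$. For $d=1$ I would compute $\mathscr{F}^{-1}[\widetilde\psi(\xi)/(i\xi)]$ directly. Writing
\[
\frac{1-e^{-i\xi y}-i\xi y}{i\xi}=-\int_0^y\bigl(e^{-i\xi s}-1\bigr)\,ds
\]
and applying Fubini, the integral becomes the Fourier transform of the tail function $\overline\nu$, up to sign. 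This identifies $\nabla K=-\overline\nu$.

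\emph{Part 1 (sufficiency).} If \eqref{volt-e06b} holds, take the divergence and use Steps 1--2 to obtain \eqref{volt-e06}, hence $L^*\rho=0$, which is \eqref{intro-e18}. If \eqref{volt-e06a} holds, use that $\diver(b_\ell\rho)^{\mathsf{sol}}=0$ by Lemma~\ref{volt-04}, so $\diver(b_\ell\rho)=\diver(b_\ell\rho)^{\mathsf{irr}}$; taking divergences again gives \eqref{volt-e06}. (A1) is not needed for this implication itself; it only upgrades infinitesimal invariance to invariance.

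\emph{Part 2 (necessity).} Here \eqref{volt-e06} gives $\diver(b_\ell\rho-\nabla K*\rho)=0$, and the task is to remove the divergence.

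\textbf{i)} $d=1$: the derivative vanishes, so $b_\ell\rho-\nabla K*\rho=c$ is constant. Both terms are integrable: $b_\ell\rho\in L_1$ by (A2) and $\rho\in L_1$, and $\overline\nu\in L_1$ by (A3), since $\int|\overline\nu|=\int|y|\,\nu(dy)<\infty$. Hence $c=0$, and in $d=1$ the irrotational part is the whole field, giving \eqref{volt-e06a}.

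\textbf{ii)--iv)} $d\geq2$: the hypotheses are exactly those of Lemma~\ref{volt-04} for $F=\rho b_\ell$, so $(b_\ell\rho)^{\mathsf{irr}}=\nabla P*\diver(b_\ell\rho)$ is well defined. By \eqref{volt-e06}, $\diver(b_\ell\rho)=\diver(\nabla K*\rho)=\widetilde\psi(D)(\ldots)$, and hence
\[
(b_\ell\rho)^{\mathsf{irr}}=\nabla P*\diver(\nabla K*\rho)=\nabla(-\Delta)^{-1}\Delta(\ldots),
\]
which should reduce to $\nabla K*\rho$. Equivalently, the difference $G:=\nabla K*\rho-(b_\ell\rho)^{\mathsf{irr}}$ is both curl-free (a gradient) and divergence-free, so each component is harmonic. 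The argument is then a Liouville theorem: show that $G$ is a gradient of a harmonic function with enough decay, so that $G=0$.

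\emph{Main obstacle.} The hard part is establishing the decay of $G$. For $\nabla K*\rho$ I would write $\nabla K*\rho=\nabla^2P*(\widetilde\psi(D)(\ldots))$ as a Calder\'on--Zygmund operator applied to $\widetilde\psi(D)\rho$ (or to $\nabla\rho$-type terms). The $W^{1,1}\cap W^{1,\infty}$ hypothesis in ii), iii), and the $L_2$ hypotheses in iv), would then give membership in $L_p$ or weak-$L_1$. For the irrotational part, $\nabla P*\diver(b_\ell\rho)$ decays like $|x|^{1-d}$ using $\diver(b_\ell\rho)\in L_1\cap L_\infty$, with the $\Lambda^\epsilon$ weight handling $d=2$. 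In iv) the $L_2$-boundedness of the Riesz transforms replaces this argument directly. A harmonic function in $L_p+L_{p,\infty}$ vanishes, hence $G=0$ and \eqref{volt-e06a} follows.
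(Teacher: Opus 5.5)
Your Step~1, the identification of $\nabla K$, and Part~1 agree with the paper. Part~2 has two genuine problems.

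In i), both integrability claims are false. (A2) only says that $b$ grows at most polynomially, so $\rho\in L_1$ does not give $b_\ell\rho\in L_1$; the paper points out after Proposition~\ref{volt-07} that $b$ is an $L_1$-multiplier only if it is bounded. Likewise, $\int|\overline\nu|\,dx=\int|y|\,\nu(dy)$ also requires $\int_{|y|\le1}|y|\,\nu(dy)<\infty$. That is the additional hypothesis of Proposition~\ref{volt-07}, and (A3) does not give it. For a symmetric $\alpha$-stable L\'evy measure, which satisfies (A3) exactly when $\alpha\in(1,2)$, one has $|\overline\nu(x)|\asymp|x|^{-\alpha}$, which is not even locally integrable. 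So you cannot conclude $c=0$, but you do not need to. In $d=1$ the solenoidal part of a field is a constant (see the paragraph preceding Lemma~\ref{volt-04}), so $b_\ell\rho-c=\nabla K*\rho$ already is \eqref{volt-e06a}. That is exactly the paper's one-line argument.

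In ii)--iv), the step you call the main obstacle is left open, and your sketch would not close it as stated. It tries to get integrability of $\nabla K*\rho$ from the regularity of $\rho$ alone.
In iv) this fails in general: $\rho\in L_2$ controls $\nabla K*\rho$ in $L_2$ only if the multiplier $i\xi\widetilde\psi(\xi)|\xi|^{-2}$ is bounded, essentially when $\int_{|y|\le1}|y|\,\nu(dy)<\infty$ (Remark~\ref{volt-11}.b). The paper's own proof of iv) is equally terse at this point.
In ii) and iii) it would mean redoing the weak-type estimate from the proof of Theorem~\ref{volt-09}.
Also, $\nabla P*\diver(b_\ell\rho)$ need not decay like $|x|^{1-d}$ pointwise; weak Young only gives $L_{d/(d-1),\infty}$.

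For ii) and iii) the paper bounds potentials instead of gradients. $K*\rho\in L_\infty$ by Lemma~\ref{proo-05}, using $\rho,\nabla\rho\in L_1\cap L_\infty$. $\varphi=P*\diver(b_\ell\rho)$ is bounded for $d\ge3$ and $O(\Lambda^\epsilon)$ for $d=2$ by Lemma~\ref{proo-01}, using $\diver(b_\ell\rho)\in L_1\cap L_\infty$. The classical Liouville theorem is then applied to the harmonic function $\varphi-K*\rho$. This is where the $W^{1,\infty}$ and $L_\infty$ hypotheses, which your sketch never uses, enter.

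Simpler still, no decay is needed at all. $L^*\rho=0$ says $\widetilde\psi(D)\rho=-\diver(b_\ell\rho)$, so the equation itself puts $\widetilde\psi(D)\rho$ in $L_1$ under each of ii)--iv). Hence $\nabla K*\rho=\nabla(-\Delta)^{-1}\widetilde\psi(D)\rho$ and $(b_\ell\rho)^{\mathsf{irr}}$ are $\nabla(-\Delta)^{-1}$ applied to the same $L_1$ function, up to the sign convention for $P$. On the Fourier side this is multiplication of a continuous function by $i\xi|\xi|^{-2}$, which is locally integrable for $d\ge2$. So both fields have the same $L_{1,\mathrm{loc}}$ Fourier transform. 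A harmonic-polynomial discrepancy would have Fourier transform supported in $\{0\}$, which no nonzero $L_{1,\mathrm{loc}}$ function has. Thus your heuristic \enquote{$\nabla P*\diver(\nabla K*\rho)$ reduces to $\nabla K*\rho$} is already a proof for $d\ge2$. It fails for $d=1$ because $1/\xi\notin L_{1,\mathrm{loc}}$, which is exactly why a constant survives in i).
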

Our second result concerns the integrability properties of $\rho$. By definition, $\rho$ is a probability density, so it is trivially in $L_1$. But we get better properties from the fact that $\rho$ is the solution to \eqref{volt-e06a}, resp., \eqref{volt-e06b}.  

We discuss first the one-dimensional case. We need to assume that the tail function $\overline\nu(x)$ is in $L_1$. Using Tonelli's theorem we see that this is equivalent to
\begin{align*}
	\int_{-\infty}^\infty |\overline{\nu}(x)|\,dx
	&= \int_0^\infty \int_x^\infty \nu(dy)\,dx + \int_{-\infty}^0\int_{-\infty}^x \nu(dy)\,dx\\
	&= \int_0^\infty \int_0^y dx\, \nu(dy) + \int_{-\infty}^0\int_{y}^0 dx\, \nu(dy)
	= \int_{-\infty}^\infty |y|\,\nu(dy) < \infty.
\end{align*}
\begin{proposition}\label{volt-07}
	Let $d=1$, assume \textup{(A1)}--\textup{(A3)} and $\int_{|y|\leq 1} |y|\,\nu(dy)<\infty$. Suppose that  \eqref{volt-e06b} is satisfied for some $\rho\in L_1$. Then $b\rho \in L_1$. 
\end{proposition}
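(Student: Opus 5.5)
In dimension one the kernel is $\nabla K = -\overline{\nu}$, so \eqref{volt-e06b} states that
\begin{gather*}
	b_\ell(x)\rho(x) = (b(x)-\ell)\rho(x) = -\overline{\nu}*\rho(x)
\end{gather*}
holds in $\mathcal{S}'(\real)$. The plan is to show that the right-hand side is an honest $L_1$ function. Then $b_\ell\rho\in L_1$, and since $\ell\rho\in L_1$, also $b\rho = b_\ell\rho + \ell\rho\in L_1$.

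\textbf{Step 1: $\overline{\nu}\in L_1$.} By the Tonelli computation preceding the proposition,
\begin{gather*}
	\|\overline{\nu}\|_{L_1} = \int_{y\neq 0}|y|\,\nu(dy).
\end{gather*}
This quantity is finite: the part over $|y|\le 1$ is finite by the extra hypothesis, and the part over $|y|>1$ is finite by (A3). So $\overline{\nu}\in L_1(\real)$.

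\textbf{Step 2: the convolution is in $L_1$.} Since $\rho\in L_1$, Young's inequality gives $\overline{\nu}*\rho\in L_1$ with
\begin{gather*}
	\|\overline{\nu}*\rho\|_{L_1}\le\|\overline{\nu}\|_{L_1}\|\rho\|_{L_1}.
\end{gather*}
The convolution is defined pointwise almost everywhere by Fubini, and as an $L_1$ function it represents the tempered distribution $\nabla K*\rho$. The paper's definition $\nabla K:=-\overline{\nu}$ means the convolution in \eqref{volt-e06b} is exactly this function, so no extra identification is needed.

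\textbf{Step 3: identification of the left-hand side.} By (A2), $b_\ell\rho$ is a measurable function, polynomially bounded times an $L_1$ function, hence a tempered distribution. Two locally integrable functions that agree in $\mathcal{S}'$ agree almost everywhere, so
\begin{gather*}
	b_\ell\rho = -\overline{\nu}*\rho \quad\text{a.e.}
\end{gather*}

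\textbf{Main obstacle.} The only delicate point is Step 3: making sure $b_\ell\rho$ is a locally integrable function, so that equality in $\mathcal{S}'$ upgrades to pointwise a.e.\ equality. Local integrability holds because $|b_\ell|\lesssim\Lambda^N$ is locally bounded and $\rho\in L_1$. Tempered growth in the sense of $\mathcal{S}'$ holds because $\int|b_\ell\rho|\,\Lambda^{-N}\,dx<\infty$. Given this, the standard uniqueness of distributions represented by $L^1_{\mathrm{loc}}$ functions yields $b\rho\in L_1$.
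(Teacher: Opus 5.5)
Your proposal is correct and follows essentially the paper's argument: $\overline{\nu}\in L_1$ by the Tonelli identity together with (A3) and the small-jump moment condition, and then Young's inequality shows that $\rho\mapsto-\overline{\nu}*\rho$ is bounded on $L_1$. You also spell out two steps the paper leaves implicit, namely upgrading the $\mathcal{S}'$-identity to an a.e.\ identity of $L^1_{\mathrm{loc}}$ functions, and passing from $b_\ell\rho$ to $b\rho=b_\ell\rho+\ell\rho$.
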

This proposition simply follows from  the fact that  $\overline\nu\in L_1$ and Young's inequality: The convolution operator  $T f := \nabla K *f  = -\overline\nu*f$  is continuous in $L_1$. Note that $b$ is a pointwise multiplier in $L_1$ if, and only if, $b\in L_\infty$; but $b$ is, in general, unbounded. In order to have $b\rho\in L_1$ we need more regularity of $\rho$. Proposition~\ref{volt-07} means that, in the one-dimensional case, this regularity must be due to  $\overline\nu\in L_1$ \ --- this condition ensures that the right-hand side of  \eqref{volt-e06b}  is in $L_1$. 

In higher dimensions the situation is different: For $d\geq 2$ the operator $T$ is not any more continuous in $L_1$. In Theorem~\ref{volt-09} we show what can be obtained from equation \eqref{volt-e06a} or \eqref{volt-e06b}. 

Recall the definition of the weak-$L_p$ or Lorentz $L_{p,\infty}$ spaces, see for example \cite[(1.1.5), Def.~1.4.6]{Gr08}. Let $f:\rd\to\real$ be a Borel measurable function, define its decreasing rearrangement 
\begin{gather}\label{volt-e08}
	f^*(t) := \inf\left\{ s>0 \mid \Leb\left(\left\{|f(\cdot)|>s\right\}\right)\leq t\right\},
	\intertext{and the norm}\label{volt-e10}
	\|f\|_{p,\infty} := \sup_{t>0} t^{1/p} f^*(t), \quad 1\leq p \leq \infty. 
\end{gather}
The set of all $f$ with $\|f\|_{p,\infty}<\infty$ is denoted by $L_{p,\infty}$ and it is called \emph{Lorentz space} ($L_{p,\infty}$ coincides with the \emph{weak $L_p$-space}). Note that $L_1 \subset L_{1,\infty}$ and $L_\infty = L_{\infty,\infty}$. 
\begin{theorem}\label{volt-09}
	Let $d\geq 2$ and assume \textup{(A1)}--\textup{(A3)}.  Let $\rho\in L_1$ be the solution to \eqref{volt-e06b} in $\mathcal{S}'(\rd)$.
	\begin{enumerate}
		\item
		Suppose that $\rho\in W^{1,1}$. Then $ b\rho \in L_{1,\infty}$. 
		
		\item
		Suppose that $\int_{|y|\leq 1} |y|\, \nu(dy) < \infty$, then $b\rho\in L_{1,\infty}$. 
	\end{enumerate}	
\end{theorem}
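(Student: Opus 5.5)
We work throughout with the equation $b_\ell\rho = \nabla K*\rho$ from \eqref{volt-e06b}. Since $\rho\in L_1$ and $\ell$ is a constant, $\ell\rho\in L_1\subset L_{1,\infty}$. It is therefore enough to show $\nabla K*\rho\in L_{1,\infty}$. For this we first make the kernel explicit. Because $K=(2\pi)^{-d}\widetilde\psi(D)P$, the representation \eqref{intro-e10} (applied to $\widetilde\psi$, i.e.\ with $y$ replaced by $-y$) gives, at least formally,
\begin{gather*}
	\nabla K*\rho = c_d \int_{y\neq 0}\left(\nabla P*\rho(\cdot-y)-\nabla P*\rho - \nabla^2P*\rho\cdot(-y)\right)\nu(dy).
\end{gather*}
We split $\nu=\nu|_{\{|y|\le 1\}}+\nu|_{\{|y|>1\}}$ and treat the two pieces separately.

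\emph{Large jumps.} Here we use Taylor's formula in integral form:
\begin{gather*}
	\nabla P*\rho(x-y)-\nabla P*\rho(x) = -\int_0^1 \nabla^2P*\rho(x-sy)\,y\,ds.
\end{gather*}
After this, every term in the large-jump part has the form $\int_{|y|>1}\int_0^1 (\nabla^2 P*\rho)(x-sy)\,y\,ds\,\nu(dy)$. The operator $\nabla^2 P*$ is (up to a multiple of the identity) a Calder\'on--Zygmund operator built from the Riesz transforms $R_jR_k$. It is therefore of weak type $(1,1)$, so $\|\nabla^2P*\rho\|_{1,\infty}\lesssim\|\rho\|_{L_1}$. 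Translations preserve the $L_{1,\infty}$ quasinorm. The space $L_{1,\infty}$ is not normable, so we cannot simply integrate the bound over $y$ and $s$. Instead we commute the averaging with the singular integral: the term equals $\nabla^2P*\mu_\rho$, where $\mu_\rho:=\int_{|y|>1}\int_0^1\rho(\cdot-sy)\,y\,ds\,\nu(dy)$ is a vector of finite measures with total mass at most $\|\rho\|_{L_1}\int_{|y|>1}|y|\,\nu(dy)$, which is finite by (A3). Weak type $(1,1)$ also holds for Calder\'on--Zygmund operators acting on finite measures (the singular part of the $L_1$-estimate is unaffected). Hence this piece lies in $L_{1,\infty}$.

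\emph{Small jumps.} This is where the two cases of the theorem differ.

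In case 2 we have $\int_{|y|\le1}|y|\,\nu(dy)<\infty$. The same trick applies to the small-jump integrand: $\nabla P*\rho(\cdot-y)-\nabla P*\rho$ becomes $\nabla^2P*\mu'$, where $\mu'$ is a finite measure whose mass is controlled by $\int_{|y|\le1}|y|\,\nu(dy)$. The compensator term is $\nabla^2P*\rho$ multiplied by $\int_{|y|\le 1}y\,\nu(dy)$, which converges. So both parts are in $L_{1,\infty}$.

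In case 1 we have $\rho\in W^{1,1}$ and only $\int_{|y|\le 1}|y|^2\,\nu(dy)<\infty$. We use the second-order Taylor formula
\begin{gather*}
	\int_0^1(1-s)\,\nabla^2P*\nabla\rho(x-sy)[y,y]\,ds,
\end{gather*}
obtained by moving one derivative onto $\rho$, which is legitimate since $\nabla(\nabla^2P*\rho)=\nabla^2P*\nabla\rho$. This produces $\nabla^2P*\mu''$, where $\mu''$ has mass at most $\|\nabla\rho\|_{L_1}\int_{|y|\le1}|y|^2\,\nu(dy)$. Weak type $(1,1)$ then gives $L_{1,\infty}$.

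\emph{Main obstacle.} The hardest step is to justify rigorously the interchange of the $\nu(dy)$-integral with the principal-value singular integral. We also need to identify $\nabla K*\rho$ (a priori defined only in $\mathcal{S}'$) with $\nabla^2P*\mu$. We would do this by testing against $\phi\in\mathcal{S}$ and using Fubini on the Fourier side. There the multiplier $\xi_j\xi_k|\xi|^{-2}$ is bounded, and the measures $\mu$ have bounded Fourier transforms, for instance $\widehat{\mu}(\xi)=\widehat\rho(\xi)\,\frac{e^{-i\xi y}-1}{-i\xi y}\,y$ averaged over $\nu$. Once this identification is in place, the weak type $(1,1)$ bound for measures (Stein) completes the argument.
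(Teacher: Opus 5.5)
Your proof is correct and follows the paper's skeleton. It uses the same split of $\nu$ at $|y|=1$ and the first-order Taylor formula for the large jumps (and for all jumps in part~2). It uses the second-order formula with one derivative moved onto $\rho$ for the small jumps in part~1, and the reduction to the Calder\'on--Zygmund kernels $\nabla^2P$.

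You genuinely differ in how the weak type $(1,1)$ bound for $\mathcal{Q}f=\nabla^2P*f$ is passed through the $\nu(dy)\,ds$-averages of translates. The paper invokes a Lorentz-space Young inequality, $\|\mathcal{Q}f*m\|_{1,\infty}\lesssim\|\mathcal{Q}f\|_{1,\infty}$ for finite measures $m$, see \eqref{proo-e16}. You instead commute the averaging with $\mathcal{Q}$ and use weak type $(1,1)$ once, on the $L_1$ function $f*m$, i.e.\ $\|\mathcal{Q}(f*m)\|_{1,\infty}\lesssim\|f*m\|_1\le\|f\|_1\|m\|$.

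Your route is the sound one. As you note, $L_{1,\infty}$ is not normable, and convolution with a finite measure is not bounded on it: $|x|^{-d}\in L_{1,\infty}$, but $|x|^{-d}*\One_{B_1(0)}=\infty$ on $B_1(0)$. So the first inequality in \eqref{proo-e16} is not covered by any Young inequality, while the composite bound the argument actually needs is exactly what you prove. Since your $\mu_\rho,\mu',\mu''$ are $L_1$ functions, you do not even need the measure version of weak type $(1,1)$.

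You are also more careful than the paper on three points: the $\delta$-part of the distributional Hessian $\nabla^2P$, the term $\ell\rho$, and the exchange of the $\nu(dy)$-integral with the principal value. The paper does that exchange formally; your Fourier-side computation justifies it in $\mathcal{S}'(\rd)$.

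One step is still implicit in both your argument and the paper's. Since $\nabla^2P*\mu$ with $\mu\in L_1$ need not be locally integrable, an identity in $\mathcal{S}'(\rd)$ does not by itself show that $b_\ell\rho$ equals the weak-$L_1$ principal-value function a.e. This can be settled by convolving \eqref{volt-e06b} with $\phi_t=t^{-d}\phi(\cdot/t)$, where $\phi\in C_c^\infty(\rd)$ and $\int\phi=1$, and letting $t\to0$. On the left, Lebesgue differentiation gives convergence to $b_\ell\rho$ a.e.; on the right, $\nabla^2P*(\mu*\phi_t)$ converges a.e.\ to the principal value plus the $\delta$-term.
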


\begin{remark}\label{volt-11}
	\emph{a.}\ \ 
	Theorem~\ref{volt-09}.a should be read as a \enquote{negative result}: If $b\rho \notin L_{1,\infty}$, then we cannot expect $\rho \in W^{1,1}$. 
	
	\medskip\noindent\emph{b.}\ \ 
	If (A3) and $\int_{|y|\leq 1} |y|\, \nu(dy) < \infty$ are satisfied, then it follows from the L\'evy--Khinchine representation \eqref{intro-e04} of $\psi$ that we have $|\psi(\xi)|\lesssim |\xi|$ for all $\xi \in \rd$. Thus, $|\widehat{\nabla K} (\xi)| = \frac{|\psi(\xi)|}{|\xi|} \lesssim 1$, $\xi\in \rd$. Using Plancherel's theorem, we see that $Tf = \nabla K*f$ is a bounded operator in $L_2$, and it follows from \cite[Chapter I\,\S5 Thm.~3]{St93} 
	that $T: L_p\cap L_2\to L_p$, $p\in (1,2]$, is continuous.
\end{remark}

\section{Proofs}\label{proo}

\noindent
We will now provide the proofs of the results mentioned in the previous section. We begin with several technical lemmas.

\begin{lemma}\label{proo-01}
	Let $d\geq 2$, $F\in C^{1,\alpha}(\rd,\rd)$ for some $\alpha\in (0,1]$ and $\diver F\in L_1  \cap L_\infty$.  If $d=2$ assume, in addition, that $\Lambda^\epsilon\diver F\in L_1$ for some $\epsilon>0$.  Then $\diver F  = \Delta\varphi$ for a potential $\varphi$,  which is bounded \textup{(}if $d>2$\textup{)} or grows at most like $\Lambda^\epsilon$ \textup{(}if $d=2$\textup{)}. 
\end{lemma}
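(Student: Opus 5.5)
The potential will be $\varphi := P * \diver F$. This is essentially the construction in Lemma~\ref{volt-04}, so the plan is to reuse it and then add quantitative bounds on the growth of $\varphi$.

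\emph{Existence and the Poisson equation.} Since $F\in C^{1,\alpha}$, the function $g:=\diver F$ is Hölder continuous of order $\alpha$. By hypothesis $g\in L_1\cap L_\infty$, and for $d=2$ also $\Lambda^\epsilon g\in L_1$. These are exactly the assumptions of Lemma~\ref{volt-04}. Its proof gives that $P*g$ is well defined and finite pointwise, and, via the Hölder-potential result \cite[Lem.~4.2]{GT98}, that $\varphi\in C^2$ with $\Delta\varphi = g = \diver F$. The sign convention for $P$ is the one fixed in that lemma, so I will not recheck it here.

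\emph{Growth bound.} Fix the cut-off $\phi$ with $\One_{B_1(0)}\le\phi\le\One_{B_2(0)}$ and split $P = \phi P + (1-\phi)P$.

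\begin{itemize}
\item The local part satisfies $|(\phi P)*g(x)| \le \|\phi P\|_{L_1}\|g\|_{L_\infty}$. Here $\phi P\in L_1$ because $|x|^{2-d}$ and $\log|x|$ are locally integrable. This bound is uniform in $x$, and it is the point where the assumption $g\in L_\infty$ is used, rather than the local supremum from Lemma~\ref{volt-04}.
\item For $d\ge3$, the far part satisfies $|(1-\phi)P|\le C_d$ on $\{|y|\ge1\}$, so $|((1-\phi)P)*g(x)|\le C_d\|g\|_{L_1}$. Hence $\varphi$ is bounded.
\item For $d=2$, Peetre's inequality $\Lambda^{-\epsilon}(x-y)\lesssim \Lambda^{-\epsilon}(x)\Lambda^{\epsilon}(y)$ gives
\[
|((1-\phi)P)*g(x)|\lesssim \Lambda^\epsilon(x)\,\bigl\|(1-\phi)\Lambda^{-\epsilon}P\bigr\|_{L_\infty}\,\|\Lambda^\epsilon g\|_{L_1}.
\]
The sup norm is finite since $\log|y|\lesssim \Lambda^{\epsilon}(y)$ for $|y|\ge1$. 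This yields $|\varphi(x)|\lesssim 1+\Lambda^\epsilon(x)\lesssim\Lambda^\epsilon(x)$.
\end{itemize}

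\emph{Main obstacle.} The only delicate point is $d=2$. The logarithmic kernel is unbounded at infinity, so the convolution is not even absolutely convergent without the weight. The direction of Peetre's inequality must be chosen so that the weight lands on $g$, which is integrable, and the growth appears only as $\Lambda^\epsilon(x)$. I would also check that choosing a smaller $\epsilon$ is harmless, since $\log$ is dominated by every power.
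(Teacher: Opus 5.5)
Your proposal is correct and follows the paper's route. You set $\varphi = P*\diver F$, get the Poisson equation from Lemma~\ref{volt-04} together with the Gilbarg--Trudinger H\"older-potential result, and read off the growth from the same cut-off/Peetre splitting. Using $\diver F\in L_\infty$ to make the local term uniform in $x$ is precisely the detail the paper leaves implicit when it says the bound \enquote{follows directly} from the proof of Lemma~\ref{volt-04}.

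On the sign you chose not to recheck: with the stated constants $C_d$, $P$ is the fundamental solution of $-\Delta$, so in fact $\Delta(P*\diver F) = -\diver F$. Taking $\varphi := -P*\diver F$ instead leaves your growth estimates untouched.
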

\begin{proof} 
	Because of our assumptions, the Helmholtz decomposition for the vector field  $F$  yields that $\diver F = \diver  \nabla \varphi = \Delta \varphi$. Thus, $\varphi = P* (\diver  F)$, where $P$ is the Newton potential. The growth behaviour of $ \varphi$ follows directly from the estimate in the proof of Lemma~\ref{volt-04}.
\end{proof} 
\begin{remark}\label{proo-01b}
	With some effort one can use integration by parts and a few convergence arguments to show that $\varphi$ in Lemma~\ref{proo-01} is bounded, if we assume that $F\in C^{1,\alpha}(\rd)\cap L_1\cap L_\infty$ and $\diver F\in L_1$ ($d\geq 3$), resp., $\Lambda^\epsilon\diver F\in L_1$ ($d=2$). This gives then a slightly different set of conditions in Theorem~\ref{volt-05}.b. 
\end{remark}
We need the concept of a Calder\'on--Zygmund kernel, see e.g.\ \cite[I.8.18]{St93}. We say that a continuous function $Q:\rd\setminus\{0\}\to\real$ is a \emph{Calder\'on--Zygmund kernel}, if 
\begin{enumerate}
	\item $Q$ is homogeneous of degree $d$; in particular, $|Q(x)|\lesssim |x|^{-d}$ for all $x\neq 0$; 
	\item $\int_{|x|=1} Q(x)\,\sigma(dx)=0$, where $\sigma(dx)$ is the surface measure on $\Ss^{d-1}$; 
	\item $|Q(x+y) - Q(x)| \lesssim |y|$ for $|y|\ll 1$ and all $|x|=1$. 
\end{enumerate} 
The linear operator $\mathcal{Q} g := Q* g$ is called a \emph{Calder\'on--Zygmund operator}. It is known that $\mathcal{Q}$ is continuous from $L_2$ to $L_2$ and from $L_1$ to $L_{1,\infty}$, see \cite[I.8.18]{St93}. By the Marcinkiewicz interpolation theorem \cite[Thm.~I.4.5]{St70}, it is continuous from $L_p$ to $L_p$ for any $1<p<2$; see also \cite[I.8.18]{St93}. 

\begin{lemma}\label{proo-02}
	Let $P$ be the Newton potential for $d\geq 2$. Then $\nabla^2 P(x)$ is a matrix whose entries are Calder\'on--Zygmund kernels. 
\end{lemma}
\begin{proof}
	We have $\nabla^2 P(x) = \frac{1}{|x|^d} \left(\delta_{ik} - d\, \frac{x_i x_k}{|x|^2}\right)_{i,k=1}^d$. Therefore, the properties 1.~\& 3.~of a Calder\'on--Zygmund operator are straightforward.  Let us show that the entries $(\nabla^2 P)_{ik}$ of the matrix $\nabla^2 P$ satisfy property 2. Recall that $d\cdot\omega_d = \int_{|x|=1} \sigma(dx)$; since the measure $\sigma(dx)$ is rotationally symmetric, we have 
	\begin{gather*}
		d\int_{|x|=1} \frac{x_i x_k}{|x|^2}\, \sigma(dx) 
		= \delta_{ik} d\int_{|x|=1} \frac{x_i^2}{|x|^2}\,\sigma(dx)
		= \delta_{ik} \omega_d.  
	\end{gather*}
	This implies $\int_{|x|=1} (\nabla^2 P)_{ik}\, d\sigma = 0$. 
\end{proof}

\begin{lemma}\label{proo-03}
	Let $d\geq 3$ and $F$ be a vector field such that $F\in C^{1,\alpha}(\rd,\rd)\cap L_1\cap L_2$ for some $\alpha\in (0,1]$ and $\diver F\in L_1$. Then $\diver F = \Delta \varphi$ for some $\varphi$ such that $\nabla \varphi\in L_p$,  $1<p\leq 2$.
\end{lemma}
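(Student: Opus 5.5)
We take $\varphi := P*\diver F$, the Newton potential of $\diver F$. Lemma~\ref{volt-04} (case $d\geq 3$) applies: $F\in C^{1,\alpha}$ makes $\diver F$ Hölder continuous, and $\diver F\in L_1$ by assumption. Hence $\varphi$ is well defined, $\varphi\in C^2$ and $\Delta\varphi=\diver F$. What remains is the integrability of $\nabla\varphi$.

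The plan is to move the divergence onto the kernel, so that $\nabla\varphi$ is a Calder\'on--Zygmund operator applied to $F$ itself. Formally,
\begin{gather*}
	\partial_i\varphi = \partial_i P*\sum_k \partial_k F_k = \sum_k (\partial_i\partial_k P)*F_k ,
\end{gather*}
where the convolution with $\partial_i\partial_k P$ is understood in the principal value sense, together with the usual delta-term $\frac1d\delta_{ik}F_k$ coming from the singularity of $\nabla^2P$ at the origin.

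The main step is to justify this identity rigorously. I would do it in $\mathcal{S}'(\rd)$ via the Fourier transform. Since $F\in L_1\cap L_2$, each $\widehat F_k$ is a bounded continuous $L_2$ function. Since $\diver F\in L_1$, its Fourier transform $i\xi\cdot\widehat F(\xi)$ is bounded. The function $\widehat{\nabla\varphi}$ is then $-i\xi|\xi|^{-2}\,(i\xi\cdot\widehat F)$, up to the normalising constants, which equals the multiplier $\xi_i\xi_k/|\xi|^2$ applied to $\widehat F_k$. This multiplier is bounded by $1$.

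To make this identification legitimate I need two facts:
\begin{itemize}
	\item $\nabla P*\diver F$ coincides with the inverse Fourier transform of this product. This holds because $\nabla P\in L_1+L_\infty$ is locally integrable, with $|\nabla P|\lesssim|x|^{1-d}$, and $\diver F\in L_1\cap L_\infty$ locally. Pairing with Schwartz test functions then reduces the claim to Fubini's theorem.
	\item Near $\xi=0$ the singularity of $\xi|\xi|^{-2}$ is integrable for $d\geq3$, so no extra distribution supported at the origin appears.
\end{itemize}
I expect this bookkeeping to be the main obstacle; dimension $d\geq3$ is used here.

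Once $\nabla\varphi=\mathcal{R}F$ is established, with $\mathcal{R}$ the matrix of second-order Riesz transforms (equivalently, convolution with $\nabla^2P$ plus $\frac1d\,\mathrm{id}$), the conclusion follows from the Calder\'on--Zygmund theory. By Plancherel, $\mathcal{R}$ is bounded on $L_2$. By Lemma~\ref{proo-02} its kernel entries are Calder\'on--Zygmund kernels, so $\mathcal{R}$ is of weak type $(1,1)$. The Marcinkiewicz interpolation theorem then gives boundedness on $L_p$ for $1<p\leq 2$. Since $F\in L_1\cap L_2\subset L_p$, we obtain $\nabla\varphi\in L_p$ for all $1<p\leq2$.
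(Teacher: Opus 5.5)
Your proposal is correct and follows the paper's route. You take $\varphi = P*\diver F$ and move the divergence onto the kernel, so that $\nabla\varphi$ is given by the Calder\'on--Zygmund kernels $\nabla^2P$ of Lemma~\ref{proo-02} acting on $F$. You then conclude from $L_2$-boundedness, weak type $(1,1)$ and Marcinkiewicz interpolation, using $F\in L_1\cap L_2\subset L_p$.

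Your Fourier-side justification of $\partial_i\varphi=\sum_k(\partial_i\partial_kP)*F_k$ tightens the paper's argument rather than changing it. In particular, it keeps the $\frac1d\delta_{ik}$ point-mass term, which the paper's formula silently drops.
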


\begin{proof}
	Let $\varphi = P* (\diver  F)$, where $P$ is the Newton potential. Then $\Delta \varphi =\diver F$. Moreover, $\partial_j \varphi = \sum_{i=1}^d  \partial_i\partial_j  P* F_i$. Lemma~\ref{proo-02} shows that  $\partial_i\partial_j P$ are Calderon--Zygmund kernels. Since $F\in L_p \subset L_1\cap L_2$, $1<p\leq  2$, we can use \cite[p.~19, Thm.~3]{St93} and \cite[pp.~44-45, 8.18]{St93} to get $\nabla \varphi \in L_p$, $1<p\leq 2$.
\end{proof}

\begin{remark}\label{proo-04}
	Note that in Lemmas \ref{proo-01} and \ref{proo-03} we actually derived that  $F^{\mathsf{irr}} = \nabla \varphi$, and  $\varphi = P*\diver F$ is unique up to a constant. This follows from the  (generalized)  Liouville theorem, see \cite[Thm.~2.6,  Thm.~3.1]{Mi24} in $L^\infty$ resp.\ $L^p$.
\end{remark}

\begin{lemma}\label{proo-05}
	The operator $\widetilde{\psi}(D)\circ(-\Delta)^{-1}$ maps $ W^{1,1}\cap W^{1,\infty}\cap C^{1}(\rd)$ into $C_b(\rd)$. 
\end{lemma}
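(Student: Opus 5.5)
We aim to show that for $f\in W^{1,1}\cap W^{1,\infty}\cap C^1(\rd)$ the function $g:=\widetilde\psi(D)(-\Delta)^{-1}f$ is a bounded continuous function. The plan is to work on the Fourier side, writing
\begin{gather*}
	\widehat g(\xi) = \frac{\widetilde\psi(\xi)}{|\xi|^2}\,\widehat f(\xi),
\end{gather*}
and to show that this is an integrable function of $\xi$. Then $g=\mathscr{F}^{-1}\widehat g$ is bounded and continuous by the Riemann--Lebesgue lemma, in its elementary form: the inverse Fourier transform of an $L_1$ function is in $C_b$. This also makes the definition of $\widetilde\psi(D)(-\Delta)^{-1}f$ rigorous, since the multiplier is then a locally integrable function times $\widehat f$.

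For the bound we split $\rd$ into $\{|\xi|\leq 1\}$ and $\{|\xi|>1\}$ and use Remark~\ref{volt-03}.b, which gives $|\psi(\xi)|\lesssim \max\{|\xi|,|\xi|^2\}$ under (A3). The same bound holds for $\widetilde\psi(\xi)=\psi(-\xi)$.
\begin{itemize}
	\item On the unit ball, $|\widehat g(\xi)|\lesssim |\xi|^{-1}|\widehat f(\xi)|\leq |\xi|^{-1}\|f\|_{L_1}$. Since $d\geq 2$, the function $|\xi|^{-1}$ is integrable near the origin. The low frequencies therefore contribute at most $C\|f\|_{L_1}$.
	\item On $\{|\xi|>1\}$ the bound is $|\widehat g(\xi)|\lesssim |\widehat f(\xi)|$, so it suffices to show $\widehat f\in L_1(\{|\xi|>1\})$.
\end{itemize}

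The high-frequency step is the main obstacle. Membership in $W^{1,1}\cap W^{1,\infty}$ only yields $|\xi|\,|\widehat f(\xi)|\lesssim\|\nabla f\|_{L_1}$. This gives decay $|\xi|^{-1}$, which is not integrable in $d\geq 2$. The route I would try first uses $\nabla f\in L_1\cap L_\infty\subset L_2$ together with Cauchy--Schwarz:
\begin{gather*}
	\int_{|\xi|>1}|\widehat f(\xi)|\,d\xi \leq \Bigl(\int_{|\xi|>1}|\xi|^{-2}\,d\xi\Bigr)^{1/2}\Bigl(\int|\xi|^2|\widehat f(\xi)|^2\,d\xi\Bigr)^{1/2}.
\end{gather*}
The second factor is $\lesssim\|\nabla f\|_{L_2}$ by Plancherel. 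The first factor is finite only when $d<2$, so this route fails for $d\geq 2$.

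Hence I would not insist on $\widehat g\in L_1$ at high frequencies. Instead I would split the multiplier itself, using the integral form of $\psi$ from \eqref{intro-e10}. Write $\widetilde\psi(D)(-\Delta)^{-1}f=\widetilde\psi(D)u$ with $u=P*f$. Because of the Fourier-side estimate above, $u$ is well defined and $\nabla u=\nabla P*f$. This is bounded and continuous: $\nabla P\in L_1$ near the origin, $\nabla P\in L_\infty$ away from it, and $f\in L_1\cap L_\infty$. Moreover $\nabla^2u=\nabla P*\nabla f$ is bounded and continuous by the same reasoning, now using $\nabla f\in L_1\cap L_\infty$ and the continuity of $\nabla f$ from $f\in C^1$.

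Then
\begin{gather*}
	\widetilde\psi(D)u(x)=-\int\bigl(u(x-y)-u(x)+\nabla u(x)\cdot y\bigr)\nu(dy).
\end{gather*}
For $|y|\leq1$ the integrand is bounded by $\tfrac12\|\nabla^2u\|_\infty|y|^2$ via Taylor's formula. For $|y|>1$ it is bounded by $2\|\nabla u\|_\infty|y|$, using the mean value theorem for the increment of $u$; this is integrable by (A3). Dominated convergence then gives continuity and a uniform bound, so $g\in C_b(\rd)$. The point needing care is justifying that this integral representation agrees with the Fourier definition for such $u$. Since $u$ is not itself bounded, I would establish this by testing against $\mathcal S$ functions and using Fubini.
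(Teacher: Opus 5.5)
Your final argument is correct and is essentially the paper's proof. The paper also writes the operator as $\widetilde\psi(D)u$ with $u=P*f$ and bounds $\nabla u$ and $\nabla^2u=\nabla P*\nabla f$ by splitting the kernel at $B_1(0)$; it then uses the integral representation of $\widetilde\psi(D)$ together with (A3) (for $d\geq 3$ it phrases this as $P*f\in C_b^2$, for $d=2$ exactly as in your last paragraph), so your Fourier-side detour is not needed.

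One small correction: in $d=2$ your low-frequency estimate does not make $u=P*f$ itself well defined, since the convolution can diverge when $f$ lacks a logarithmic moment. Nothing is lost, though, because $u(x-y)-u(x)=-\int_0^1\nabla u(x-ty)\cdot y\,dt$ lets you run the whole argument with $\nabla u=\nabla P*f$ alone.
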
	
\begin{proof}
	Let $d\geq 3$. Since $P(x) \asymp |x|^{2-d}$ and $|\nabla P(x)|\lesssim |x|^{1-d}$, we conclude that $P$, $\nabla P\in L_{1,\mathrm{loc}} \cap L_\infty(\rd\setminus B_1(0))$. For $f\in W^{1,1}\cap W^{1,\infty}$ we have $\nabla (P*f)= P*\nabla f$ and $\nabla^2(P*f)= (\nabla P)*(\nabla f)$ (to be read coordinatewise). We claim that $P*f$, $P*\nabla f$ and $(\nabla P)*(\nabla f)$ are bounded.
	
	Indeed, all expressions are of the form $Q*g$ where $Q\in L_{1,\mathrm{loc}} \cap L_\infty(\rd\setminus B_1(0))$ and $g\in L_1\cap L_\infty$. Using Young's inequality we get
	\begin{gather}\label{proo-e11}
		\|Q*g\|_{L_\infty} 
		\leq \|Q\One_{B_1(0)}\|_{L^1} \|g\|_{L^\infty} + \|Q\One_{\rd\setminus B_1(0)}\|_{L_\infty} \|g\|_{L_1}
		< \infty.
	\end{gather}
	and use for each term Young's inequality.
	
	Since $f$ is also in $C^1(\rd)$, we easily see that $P*f$, $P*\nabla f$ and  $(\nabla P)* (\nabla f)$  are  continuous. Thus, for $f\in C^1\cap W^{1,1}\cap W^{1,\infty}$ we have $P*f \in C_b^2(\rd)$. From the representation \ref{intro-e10} it is clear that $\widetilde{\psi}(D): C^2_b(\rd) \to C_b(\rd)$, proving the claim.  
	
	For $d=2$, the function $u:=P*f$ is unbounded even for $f\in C_c^\infty(\rd)$, but the derivatives $\nabla u  = (\nabla P)*f$ and  $\nabla^2 u  = (\nabla P)*(\nabla f)$ are bounded  for $f\in W^{1,1}\cap W^{1,\infty}$ by the same argument as in \eqref{proo-e11}.  Hence, using the representation of $\psi$ and (A3), we derive 
	\begin{gather*}
		\left\|\widetilde{\psi} (D) u \right\|_{L_\infty }
		\lesssim \left\|\nabla^2 u\right\|_{L_\infty} \int_{|u|\leq 1} |u|^2 \,\nu(du) 
		+ \left\|\nabla u\right\|_{L_\infty} \int_{|u|\geq 1} |u|\,\nu(du)
		<\infty. 
	\end{gather*}
	This completes the proof. 
\end{proof} 
Let us now analyze the structure of the right-hand side of \eqref{volt-e06a}.  Consider first the one-dimensional case. 

\begin{proposition}\label{proo-09}
	Let $d=1$. Assume \textup{(A1)}--\textup{(A3)}.   
	Then the  inverse  Fourier transform of $\frac 1{2\pi} \widetilde{\psi}(\xi)|\xi|^{-2}$ is given by
	\begin{gather*}
		\overline{\overline\nu}(x)
		:= \One_{(0,\infty)}(x) \int_x^\infty \bar\nu(y)\,dy - \One_{(-\infty,0)}(x)\int_{-\infty}^{x} \bar\nu(y)\,dy,
	\end{gather*}
	and $\frac d{dx}\overline{\overline\nu}(x) = -\overline{\nu}(x)$. Consequently,  $\nabla K = -\overline\nu$.
\end{proposition}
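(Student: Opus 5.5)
The idea is to compute the distributional derivatives of $\overline{\overline\nu}$ and then take Fourier transforms, rather than inverting $\widetilde\psi(\xi)|\xi|^{-2}$ directly. The key identity is
\[
\frac{d^2}{dx^2}\,\overline{\overline\nu}=-\overline{\nu}'=\widetilde{\nu},\qquad \widetilde\nu(B):=\nu(-B),
\]
with the compensator terms at $0$ cancelling.

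\textbf{Step 1: first derivative.} Under (A3) and $\int_{|y|\le1}|y|^2\nu(dy)<\infty$, Tonelli gives $\int_{|x|>1}|\overline\nu(x)|\,dx=\int_{|y|>1}(|y|-1)\,\nu(dy)<\infty$. Hence $\int_x^\infty\overline\nu(y)\,dy$ is finite for $x>0$, and similarly on the negative side. So $\overline{\overline\nu}$ is well defined and locally absolutely continuous on $\real\setminus\{0\}$, with $\frac{d}{dx}\overline{\overline\nu}=-\overline\nu$ there by the fundamental theorem of calculus.

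Near $0$ we only have $\overline\nu(x)=O(x^{-2})$ on average. More precisely, $\int_0^1 x\,\overline\nu(x)\,dx\le\frac12\int y^2\wedge\ldots$, so $x\,\overline\nu\in L_1(-1,1)$. Also $\overline{\overline\nu}(x)\lesssim\int_x^1\overline\nu+C$, so $|\overline{\overline\nu}(x)|\lesssim x^{-1}$ on average and $\overline{\overline\nu}\in L_{1,\mathrm{loc}}$. Tonelli again gives the cleaner representation
\[
\overline{\overline\nu}(x)=\int_{(x,\infty)}(y-x)\,\nu(dy)\quad (x>0),
\]
and symmetrically for $x<0$.

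\textbf{Step 2: action on test functions.} For $\phi\in\mathcal S$, by Fubini,
\[
\langle\overline{\overline\nu},\phi''\rangle=\int_{y>0}\int_0^y (y-x)\,\phi''(x)\,dx\,\nu(dy)+(\text{negative part}).
\]
Integrating by parts twice, the inner integral equals $\phi(y)-\phi(0)-y\phi'(0)$. Therefore
\[
\langle\overline{\overline\nu},\phi''\rangle=\int_{y\neq0}\bigl(\phi(y)-\phi(0)-y\phi'(0)\bigr)\,\nu(dy),
\]
which converges absolutely by (A3).

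\textbf{Step 3: Fourier transform.} Take $\phi=\widehat g$, or equivalently use $\phi(y)=e^{-i\xi y}$ in a limiting sense. The identity from Step 2 gives $-\xi^2\,\mathscr F\overline{\overline\nu}(\xi)=-\frac1{2\pi}\widetilde\psi(\xi)$ in $\mathcal S'$, with the normalisation $\frac1{2\pi}$ of $\mathscr F$ and $\widetilde\psi(\xi)=\psi(-\xi)$ coming from the sign $e^{-i\xi y}$. Hence $\mathscr F\overline{\overline\nu}=\frac1{2\pi}\widetilde\psi(\xi)|\xi|^{-2}$ away from $\xi=0$.

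\textbf{Main obstacle: uniqueness at $\xi=0$.} A distribution supported at $0$ could be added, since $\xi^2 T=0$ only forces $T=a\delta+b\delta'$. This must be ruled out using the meaning of $|\xi|^{-2}\widetilde\psi$ as a locally integrable function. By Remark~\ref{volt-03}\,b one has $|\widetilde\psi(\xi)|\xi|^{-2}|\lesssim|\xi|^{-1}$ near $0$, which is borderline. So it is preferable to define things at the level of the first derivative: $\widetilde\psi(\xi)/(i\xi)$ is bounded near $0$ and $\overline\nu\in L_1+L_\infty$. First check directly that $\mathscr F(-\overline\nu)=\frac{1}{2\pi}\widetilde\psi(\xi)/(i\xi)$ via Step 2 with one integration by parts, where both sides are functions, so no point mass can appear. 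Then integrate once.

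Finally, $\frac d{dx}\overline{\overline\nu}=-\overline\nu$ also holds across $0$ distributionally, because $x\,\overline\nu$ is integrable and the boundary terms $\epsilon\,\overline{\overline\nu}(\pm\epsilon)\to0$. Combined with the definition of $K$, this gives $\nabla K=-\overline\nu$.
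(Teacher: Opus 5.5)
Your Steps 1--3 are correct. The identity $\langle\overline{\overline\nu},\phi''\rangle=\int_{y\neq0}(\phi(y)-\phi(0)-y\phi'(0))\,\nu(dy)$ holds under (A3) and gives $\xi^2\,\mathscr F\overline{\overline\nu}=\frac1{2\pi}\widetilde\psi$ in $\mathcal S'$. Note that it says $\overline{\overline\nu}''$ is the compensated $\nu$ itself, not the reflected measure $\widetilde\nu$; the reflection only enters through $e^{-i\xi y}$.

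The proof breaks where you remove the ambiguity at $\xi=0$ and pass from $\overline{\overline\nu}$ to $-\overline\nu$. There are three problems.

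\emph{First}, $\overline\nu\in L_1+L_\infty$ is false under (A1)--(A3). Your Step 1 only gives $x\,\overline\nu(x)\in L_1(-1,1)$, and for $\nu(dy)=y^{-1-\alpha}\One_{(0,1)}(y)\,dy$ with $1\le\alpha<2$ one has $\overline\nu(x)\asymp x^{-\alpha}$ as $x\downarrow0$. So $\overline\nu$ need not be locally integrable, and the only meaning of $-\overline\nu$ as a distribution is $\overline{\overline\nu}'$.

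\emph{Second}, ``both sides are functions, so no point mass can appear'' is not an argument, and a point mass does appear. If $\int|y|\,\nu(dy)<\infty$, Fubini gives $2\pi\,\mathscr F\overline\nu(\xi)=\int_{y\neq0}\frac{1-e^{-i\xi y}}{i\xi}\,\nu(dy)=\frac{\widetilde\psi(\xi)}{i\xi}+\int y\,\nu(dy)$. The constant is the Fourier transform of a mass at $x=0$.

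\emph{Third}, your final paragraph miscomputes the boundary terms. Integrating by parts on $\{|x|>\epsilon\}$ produces $\overline{\overline\nu}(\epsilon)\phi(\epsilon)-\overline{\overline\nu}(-\epsilon)\phi(-\epsilon)$, not $\epsilon\,\overline{\overline\nu}(\pm\epsilon)$, and $\overline{\overline\nu}(0+)-\overline{\overline\nu}(0-)=\int y\,\nu(dy)$. Concretely, for $\nu=\delta_1$ one has $\overline{\overline\nu}=(1-x)\One_{(0,1)}$. Its Fourier transform is exactly $\frac1{2\pi}\widetilde\psi(\xi)\xi^{-2}$, but $\overline{\overline\nu}'=\delta_0-\overline\nu$ in $\mathcal D'(\real)$.

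Two smaller points. Since $i\xi\cdot\xi^{-2}=-1/(i\xi)$, the formula you want is $\mathscr F(\overline{\overline\nu}')=-\frac1{2\pi}\widetilde\psi(\xi)/(i\xi)$, not with a plus sign. And ``integrate once'' would leave an additive constant undetermined.

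What removes the ambiguity in the paper is truncation. With $\nu_R=\One_{(0,R)}(|y|)\,\nu$ one has $\overline{\overline\nu}_R\in L_1$, with integral $\frac12\int_{|y|<R}y^2\,\nu(dy)$. So $\mathscr F\overline{\overline\nu}_R$ is a continuous function which equals $\frac1{2\pi}\widetilde\psi_R(\xi)\xi^{-2}$ off $0$, hence everywhere. Then $R\to\infty$ is taken in $\mathcal S'$ at the level of $i\xi\widetilde\psi_R(\xi)\xi^{-2}$, which is bounded by $C(1+|\xi|)$ uniformly in $R$.

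Within your own framework there is a shorter fix: compute $\langle\overline{\overline\nu}',\widehat g\rangle=-\langle\overline{\overline\nu},(\widehat g)'\rangle$ directly. One integration by parts gives $-\int_{y\neq0}\int_0^y(\widehat g(x)-\widehat g(0))\,dx\,\nu(dy)$ (oriented inner integral). Since $\int_0^y(e^{-i\xi x}-1)\,dx=\frac{1-e^{-i\xi y}-i\xi y}{i\xi}$, and this is bounded by $\min(|\xi|y^2,2|y|)$, Fubini identifies $\mathscr F(\overline{\overline\nu}')=\frac1{2\pi}\,i\xi\,\widetilde\psi(\xi)\xi^{-2}$ outright, with no uniqueness step.

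Either way, what is proved is $\nabla K=\overline{\overline\nu}'$ in $\mathcal S'$. This agrees with the function $-\overline\nu$ only on $\real\setminus\{0\}$; for $\int|y|\,\nu(dy)<\infty$ it equals $-\overline\nu+(\int y\,\nu(dy))\,\delta_0$. The paper's ``integration by parts'' for the second equality in \eqref{proo-e12} skips this same boundary term. So the last assertion of the statement has to be read with $-\overline\nu$ meaning $\overline{\overline\nu}'$. It cannot be turned into an identity for the function $\overline\nu$ across $0$, which is what your last paragraph attempts.
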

\begin{proof} 
	The definition of $K:=(2\pi)^{-d}\,\widetilde{\psi}(D)P$ is equivalent to $K*u = \widetilde{\psi}(D)(-\Delta)^{-1}u$, and the latter makes sense for all dimensions $d\geq 1$, see the proof of Theorem~\ref{volt-05} further down.  We are going to show that $K' = - \overline{\nu}$ if $d=1$. 
	
	Denote by $\nu_R(dy) := \One_{(0,R)}(|y|)\,\nu(dy)$ the truncated L\'evy measure and
	\begin{gather*}
		\psi_R(\xi) := \left(\int_0^R + \int_{-R}^0 \right) \left( 1- e^{i\xi y} + i\xi y \right) \nu(dy) = I_1 + I_2.
	\end{gather*}
	We show first that 
	\begin{gather}\notag
		i\xi \frac{\psi_R(\xi )}{\xi^2}= i\xi \,\widecheck{\overline{\overline{\nu}}_R}(\xi)
		= \widecheck{\overline{\nu}_R}(\xi), 
		\intertext{which becomes by changing $\xi \to -\xi$,}\label{proo-e12}
		\frac 1{2\pi}\,i\xi \frac{\widetilde{\psi}_R(\xi)}{\xi^2}= i\xi \,\widehat{\overline{\overline{\nu}}_R}(\xi)
		= -\widehat{\overline{\nu}_R}(\xi), 
	\end{gather}
	where
	\begin{align*}
		\overline{\nu}_R(x) 
		:=&\: \nu_R(x,\infty)\One_{(0,\infty)}(x) - \nu_R(-\infty,x)\One_{(-\infty,0)}(x)\\ 
		=&\: \nu(x,R)\One_{(0,R)}(x) - \nu(-R,x)\One_{(-R,0)}(x),
	\end{align*}
	so that $\frac d{dx} \overline{\overline\nu}_R(x) = -\overline{\nu}_R$. We consider only $I_1$, the calculations for $I_2$ are similar.  By Fubini's theorem we get
	\begin{align*}
		I_1 
		= \int_0^R \left(1- e^{i\xi y} + i\xi y\right) \nu(dy)
		&= \xi^2 \int_0^R\int_0^y\int_0^x e^{i\xi s}\,ds\,dx\,\nu(dy)\\
		&= \xi^2 \int_0^R\int_s^{R} \int_x^{R} \nu(dy)\,dx \,e^{i\xi s}\,ds\\
		&= \xi^2 \int_0^R e^{i\xi s}\,\overline{\overline{\nu}}_{R}(s)\,ds.
	\end{align*} 
	The use of Fubini's theorem is justified, since we can use Tonelli's theorem and the fact that $\nu$ is a L\'evy measure:
	\begin{gather*}
		\int_0^R \overline{\overline{\nu}}_R(s)\,ds
		= \int_0^R \int_s^R \int_x^R \nu(dy)\,dx\,ds
		= \int_0^R \int_0^y \int_0^s\,dx\,ds\,\nu(dy)
		= \frac 12 \int_0^R y^2\,\nu(dy).
	\end{gather*}
	The last integral being finite, $I_1$ is well-defined and it is the inverse Fourier transform of $\overline{\overline{\nu}}_R$ on the positive half-axis. 
	The second equality in \eqref{proo-e12} follows  by integration by parts.

	Note that $a_R(\xi):=i\xi \widetilde{\psi}_{R}(\xi)\xi^{-2}$ belongs to $\mathcal{S}'(\real)$ and converges pointwise to $a(\xi):=i\xi \widetilde{\psi}(\xi)\xi^{-2}$ as $R\to \infty$; since  both functions $a_R(\xi)$ and $a(\xi)$ are bounded from above by  a constant multiple of $(1+|\xi|)$ (cf.\ Remark~\ref{volt-03}.b), we also have a convergence in $\mathcal{S}'(\real)$. Thus,   $- 2\pi \,\overline{\nu}$ is the inverse Fourier transform of $a(\xi)$. Since $a(D)f = (a\widehat f){\widecheck{\phantom{I}}} =  (2\pi)^{-1}\widecheck a* f$, we obtain 
	\begin{equation*}
		\nabla\widetilde{\psi}(D)(-\Delta)^{-1} f(x)  = -\overline{\nu} * f(x),\quad f\in\mathcal{S}(\real). 
		\qedhere
	\end{equation*}
\end{proof}
We can now prove Theorem~\ref{volt-05}.
\begin{proof}[of Theorem~\ref{volt-05}]
	By assumption, $b_\ell$ is measurable and grows at most like a polynomial. Therefore, $\diver(b_\ell \rho)$ is for $\rho\in L_1$ an element of $\mathcal{S}'(\rd)$. Moreover, the function $i\xi\widetilde{\psi}(\xi)|\xi|^{-2}$ can be bounded by $C\max\{1,|\xi|\}$, see Remark~\ref{volt-03}.b., and for $\rho\in L_1$ we conclude that $i\xi\widetilde{\psi}(\xi)|\xi|^{-2}\widehat\rho(\xi)$ defines an element of $\mathcal{S}'(\rd)$, hence the Fourier transform $-\nabla\widetilde{\psi}(D)(-\Delta)^{-1}\rho\in \mathcal{S}'(\rd)$.  Since the (formal) adjoint of $\psi(D)$ is $\widetilde\psi(D)$,  this shows that $L^*\rho = 0$ in $\mathcal{S}'(\rd)$ makes sense and that it is equivalent to \eqref{volt-e06}.
	
	\medskip\noindent
	\emph{Proof of}~\ref{volt-05}.a. 
	Let $d>1$.
	Since $a(D)u = (2\pi)^{-d}\,\widecheck a*u$, we see that $\nabla \widetilde{\psi}(D)(-\Delta)^{-1}\rho = \nabla K*\rho$ if we use $K(x):=(2\pi)^{-d}\,\widetilde{\psi}(D)P(x)$. The case $d=1$ has already dealt with in Proposition~\ref{proo-09}.
	Thus, any of the conditions \eqref{volt-e06a} or \eqref{volt-e06b} imply \eqref{volt-e06}, hence $L^*\rho =0$.

	\medskip\noindent\emph{Proof of}~\ref{volt-05}.b. 
	\textbf{i)\ \ } In one dimension $\diver = \frac d{dx}$ and we get immediately from \eqref{volt-e06} that $\nabla K*\rho = b_\ell\rho + \text{const}$. 
	
	\medskip\noindent
	\textbf{ii), iii)\ \ } By assumption, we have  $\rho, b\rho, b_\ell\rho \in L_{1}$. We already know from \eqref{volt-e06} that $L^*\rho = 0$ can be expressed as
	\begin{gather*}
		\diver\left( b_\ell\rho - \nabla \circ \widetilde{\psi}(D) \circ (-\Delta)^{-1}\rho \right) =0
		\quad\text{in}\quad \mathcal{S}'(\rd).
	\end{gather*}
	Using that $\diver (b_\ell\rho) = \diver \nabla \varphi = \Delta \varphi$, where $\varphi = P * \diver (b_\ell\rho)$, we get
	\begin{gather*}
		\diver\left(\nabla \varphi - \nabla K * \rho\right)
		= \Delta (\varphi - K* \rho ) = 0 
		\quad\text{in}\quad \mathcal{S}'(\rd).
	\end{gather*}
	Lemma~\ref{proo-01} shows that $\varphi$ is bounded  or satisfies $\lim_{|x|\to\infty} \frac{ \varphi(x)}{|x|}=0$,  and Lemma~\ref{proo-05} proves that $K* \rho$ is bounded.  Thus, the generalized classical Liouville theorem,  see e.g.\ \cite[Thm.~3.1]{Mi24}, can be applied and shows that $\varphi - K* \rho = \mathrm{const.}$, implying \eqref{volt-e06a}. 
	
	\medskip\noindent
	\textbf{iv)\ \ }
	Our assumptions show that $\nabla \varphi, \nabla K*\rho\in L_p$, $1<p\leq 2$, see Lemma~\ref{proo-03}, Lemma~\ref{proo-02} and the paragraph preceding that lemma. We conclude from the $L_p$-version of the Liouville theorem (cf.~\cite[Thm.~2.6]{Mi24}) that $\varphi - K*\rho = \text{const.}$, hence, 
	\begin{gather*}
		(b_{\ell}\rho)^{\mathsf{irr}}= \nabla \varphi =\nabla  K* \rho.   \qedhere 
	\end{gather*}
\end{proof}

\begin{remark}\label{proo-13}
	If $\psi(\xi)= |\xi|^\alpha$, $\alpha\in (0,2)$, then 
	$K(x) 
	= \mathscr{F}^{-1}_{\xi\to x}\left[|\xi|^{-(2-\alpha)}\right]$ is the Riesz kernel, cf.\ \cite[pp.~112--113]{Gr09}, i.e.\ the tempered distribution given by the kernel 
	\begin{gather}\label{exa-e16}
		K(x) = 
		\pi^{-d/2} 2^{\alpha-2}\frac{\Gamma\left(\frac{d+\alpha}{2}-1\right)}{\Gamma\left(\frac{2-\alpha}{2}\right)}\, |x|^{2-\alpha-d},
		\quad  0<\alpha<2.
	\end{gather}
\end{remark}
We will now consider dimensions $d\geq 2$. Assume (A3). Then we have 
\begin{align*}
	\left(\frac{\widetilde{\psi}(\cdot)}{|\cdot|^2}\right)\!\!\widecheck{\phantom{\frac{M}{I}}}\!\!\!\!\!\!(x)
	= C_d  \widetilde{\psi}(D)|x|^{2-d} 
	&=  C_d  \int_\rd \left[\frac{1}{|x-y|^{d-2}}- \frac{1}{|x|^{d-2}} + \frac{(2-d) x\cdot y}{|x|^d }\right]\nu(dy) 
	\intertext{or, if $\nu$ is symmetric,} 
	&=  C_d \, \mathrm{p.v.}\!\!\int_\rd \left(\frac{1}{|x-y|^{d-2}}-\frac{1}{|x|^{d-2}}\right)\nu(dy).  
\end{align*}

\begin{remark}\label{proo-15}
	In dimension $d=1$, the $\overline{\nu}$ is, in general, not of Calder\'on--Zygmund type, since it lacks homogeneity. Nevertheless, we can use methods of harmonic analysis to ensure that $Tf := \overline{\nu}*f$ is bounded from $L_1$ to $L_{1,\infty}$. The key is the so-called \emph{Hörmander condition}  (see \cite[p.~25 (18)]{St93}): For any $c>1$ there is a constant $A$ such that for all $y\neq 0$ 
	\begin{gather*} 
		\smash[t]{\int_{|x|\geq c|y|} \left|\overline{\nu}(x-y)-\overline{\nu}(x)\right| dx \leq A,}
	\end{gather*}
	which holds, if $\nu$ has the integrated tail property  $\int_{x\neq 0}|y|\,\nu(dy) = \int_{x\neq 0} |\overline\nu(x)|\,dx < \infty$, see the discussion before Proposition~\ref{volt-07}.  The claim now follows from the proof of \cite[Theorem 3, pp.\ 19--21]{St93}. The integrated tail property also entails that $\widecheck{\overline{\nu}}(\xi) = \frac{\psi(\xi)}{i\xi}$ is bounded (use \eqref{proo-e12}, Remark~\ref{volt-03}.b.~and a similar estimate for large $|\xi|$); thus, hence $T$ maps $L_2$ into $L_2$ and, by interpolation, $L_p$ into $L_p$ for all $p\in (1,2)$.
\end{remark}

\begin{proof}[of Theorem~\ref{volt-09}]a)\ \ 
	Set $\mathcal{K} := \nabla K$ and note that 	
	\begin{gather*}
		\Phi(x,y) 
		:= P(x - y) - P(x) + \nabla P(x) \cdot y
		= \int_0^1 (1-t) y\cdot \nabla^2_x P(x-ty) y\,dt,
	\end{gather*}
	where $P(x)$ is the Newton kernel. Split 
	\begin{align*}
		\mathcal{K} (x) 
		= \int_\rd  \nabla_x \Phi(x,y)\, \nu(dy) 
		&= \left(\int_{|y|\leq 1} + \int_{|y|>1}\right) \nabla_x \Phi(x,y)\, \nu(dy) \\
		&=: \mathcal{K}_{\textrm{small}}(x) + \mathcal{K}_{\textrm{big}}(x). 
	\end{align*}
	If $\rho\in W^{1,1}$, then $\mathcal{K}_{\textrm{small}} *\rho = K_{\textrm{small}}* \nabla \rho$  (the convolution is meant coordinatewise), where $K_{\textrm{small}}(x,y) := \int_{|y|\leq 1} \Phi(x,y)\, \nu(dy)$, implying that
	\begin{align*}
		\mathcal{K}_{\textrm{small}} * \rho (x) 
		&= \int_{\rd} \int_{|y|\leq 1} \int_0^1 (1-t) y\cdot \nabla^2 P(z - ty)y \nabla \rho(x-z)\,dt\,\nu(dy)\,dz\\
		&= \int_{|u|\leq 1} \int_0^1  (1-t) \left(y\cdot\nabla^2 P(\cdot) y * \nabla \rho\right)(x - t y)\,  dt\,\nu(dy). 
	\end{align*}
	By Lemma~\ref{proo-02},  $Q:=\nabla^2 P$ is a  matrix of Calder\'on--Zygmund kernels,  hence, the operator $\mathcal{Q} f := Q*f$  (defined coordinatewise)  maps $L_1$ continuously into $L_{1,\infty}$. Now we use the Young inequality in Lorentz spaces, cf.\ \cite[Thm.~2.6]{ON63} or \cite{Bl73}: If $m(\cdot)$ is a finite measure on $\rd$, then 
	\begin{gather}\label{proo-e16}
		\|\mathcal{Q} f * m \|_{1,\infty} \lesssim \|\mathcal{Q} f \|_{1,\infty} \lesssim \|f\|_1, \quad f\in L_1. 
	\end{gather}
	Using this estimate for the operator $\mathcal{K}_{\textrm{small}}$ and the finite measure $|y|^2 \One_{(0,1]}(|y|)\,\nu(dy)$, we arrive at 
	\begin{gather}\label{proo-e18}
		\|\mathcal{K}_{\textrm{small}} *\rho \|_{1,\infty} \lesssim  \|\nabla \rho \|_1. 
	\end{gather}
	Consider now $\mathcal{K}_{\textrm{big}}$. By (A3), 
	\begin{align*}
		\mathcal{K}_{\textrm{big}}* \rho (x) 
		&= \int_\rd \int_{|y|>1} \int_0^1  \left(\nabla^2 P(z) - \nabla^2 P(z - ty)\right) y \rho(x-z)\,dt \,\nu(dy) \,dz \\
		&= \int_{|y|>1} \int_0^1  \left(\mathcal{Q}\rho(x) - \mathcal{Q}\rho(x -  t y)\right)y  \,dt\,\nu(du). 
	\end{align*}
	Using again \eqref{proo-e16} (for the finite measure $\One_{(1,\infty)}(|y|)\,\nu(dy)$) and (A3), we arrive at
	\begin{gather}\label{proo-e20}
		\|\mathcal{K}_\textrm{big}* \rho\|_{1,\infty}\lesssim \|\rho\|_{1}. 
	\end{gather}
	Thus, a)  follows from \eqref{proo-e18} and \eqref{proo-e20}. 
	
	\bigskip\noindent
	b) If we assume  (A3) and  $\int_{|y|\leq 1} |y|\, \nu(dy) < \infty$, then we may write 
	\begin{align*}
		\mathcal{K} * \rho(x) 
		=  \int_\rd \int_0^1  \left( \mathcal{Q}\rho(x) - \mathcal{Q}\rho(x - t y) \right) y  \,dt\,\nu(dy), 
	\end{align*}
	and b)  follows by the same argument as for $\mathcal{K}_\textrm{big}$ in the first part of the proof. 
\end{proof}

\section{L\'evy-driven Ornstein--Uhlenbeck process}\label{exa}

\noindent
Consider the special case of a L\'evy-driven Ornstein--Uhlenbeck process in $\rd$: 
\begin{gather}\label{exa-e02}
	dX_t = - QX_t dt + d Z_t, \quad t>0, 
\end{gather}
where $Q$ is a $d\times d$ matrix whose eigenvalues have strictly positive real parts (notation: $Q\in M_+(\rd)$), and  $(Z_t)_{t\geq 0}$ is a L\'evy process in $\rd$ with characteristic function $\psi(\xi)$, $\xi \in \rd$.  It is shown in \cite[Thm.~4.1, Thm.~4.2]{SY84} that $Q\in M_+(\rd)$ together with the moment condition 
\begin{gather}\label{exa-e04}
	\int_{|y|\geq \epsilon} \log^+|y|\,\nu(dy) < \infty
\end{gather}
implies $X_t\xRightarrow{\text{law}} X_\infty$, and that the limiting distribution is a self-decomposable probability measure.  This result was first proved in \cite[Thm.~1]{Wo82} in the one-dimensional case; see also \cite{ARJ00}, where the invariant  were described by analytic methods. If \eqref{exa-e04} fails, then the process has no  invariant measure.  Moreover, the Fourier transform of the stationary density is given by 
\begin{gather*}
	\widehat{\rho}(\xi)   
	= (2\pi)^{-d}\exp\left[{-\int_0^\infty \psi\left(e^{-s Q^\top}\xi\right) ds}\right]
	= (2\pi)^{-d}\exp\left[{-\int_0^1 \psi\left(t^{Q^\top}\xi\right)\,\frac{dt}{t}}\right],  
\end{gather*}
where we use the notation $t^Q:= e^{(\log t)Q}$ and the substitution $s=-\log t$. 

Let us show how this result can be  recovered  by our approach. We have 
\begin{gather*}
	\diver \left(Q x \rho - \nabla \circ \psi(D)\circ (-\Delta)^{-1} \rho  \right) =0.
\end{gather*}
By the linearity of the Fourier transform, $\widehat{(Q x \rho)}(\xi) = iQ \nabla  \widehat{\rho}(\xi)$. Then \eqref{volt-e06} becomes 
\begin{gather*}
	\smash[t]{Q\nabla \widehat{\rho}(\xi) 
	= -\widehat\rho(\xi) \frac{\psi(\xi)}{|\xi|^2} \xi \quad \text{a.e.}}
\end{gather*}
Scalar multiplication with the vector $\xi$ shows 
\begin{gather*}
	Q\nabla \widehat{\rho}(\xi) \cdot \xi = - \widehat\rho(\xi) \psi(\xi),  
\end{gather*}
which we can also write as 
\begin{gather*}
	\nabla \log \widehat{\rho}(\xi) \cdot Q^\top\xi = - \psi(\xi). 
\end{gather*}
In order to solve this equation we observe that 
\begin{gather*}
	\log \widehat{\rho}(0) - \log \widehat{\rho}(\xi)
	= \int_0^\infty \frac{d}{ds} \log \widehat{\rho}\left(e^{-s Q^\top}\xi\right) ds.   
\end{gather*}
Therefore,
\begin{align*}
	\log \widehat{\rho}(0) - \log \widehat{\rho}(\xi)
	&= -\int_0^\infty \left(\nabla \log \widehat{\rho}\right)\left(e^{-s Q^\top}\xi\right) Q^\top e^{-s Q^\top}\xi \, ds \\
	& = \int_0^\infty \psi\left(e^{-sQ^\top}\xi\right) ds,  
\end{align*}
where we use $\frac{d}{dt} e^{- t A} \xi = - A e^{-tA} \xi$. As $\widehat\rho(0)=(2\pi)^{-d}$, we get
\begin{gather}\label{exa-e08}
	\widehat{\rho}(\xi) 
	= (2\pi)^{-d}\exp\left[{-\int_0^\infty \psi\left(e^{-s Q^\top}\xi\right) ds}\right]. 
\end{gather}

\begin{remark}\label{exa-02}
	As a corollary, we also recover the result from \cite[Lem.~5.1]{BN98}, see also \cite{Ma04}, that the function
	\begin{gather*}
		-\nabla  \log\widehat\rho(\xi) \cdot Q^\top \xi,
	\end{gather*}
	is negative definite (in the sense of Schoenberg)  if $Q\in M_+(\rd)$.
\end{remark}

\begin{remark}\label{exa-07}
	Let $(X_t)_{t\geq 0}$ be a Feller process with generator $L= b(x) \cdot \nabla - \psi(D)$ and domain $D(L)$. Assume that $\sigma(\cdot) \in C_b(\rd,\real)$ is strictly positive and set
	\begin{gather*}
		A_t := \int_0^t \frac{ds}{\sigma(X_s)}, \quad \tau(t):= \inf\{s > 0 \mid A_t >s\}. 
	\end{gather*}
	By \cite[Cor.~4.2]{BSW13}  
	the time-changed process $Y_t:= X_{\tau(t)}$ is again a Feller process whose generator is the closure of $(\sigma(\cdot)L, D(L))$. Thus, if the process $X$ has a unique invariant probability measure with density $\rho(x)$, then $\rho^\sigma(x) := C\rho(x)/\sigma(x)$ is the unique invariant probability measure for $Y$; here $C>0$ is the normalization constant. 
\end{remark}

\section{L\'evy-driven SDE vs.\ reflecting  Brownian motion}\label{app}

\noindent
The structure results on the invariant measure $\rho$ allow us to construct SDEs which can be used to approximate (functionals of) $\rho$ by simulation methods.  The paper \cite{Ol24} considers this problem for $d=1$,  and contains an explicit construction of a L\'evy-driven SDE, which may be used for simulation of (functionals of) $X_t$. If we also allow models with reflection, we can have several SDEs whose solutions have the same invariant measure. This is illustrated in the following example, and it is a natural question to ask which model is more efficient. This depends heavily on the structure of the invariant measure. Intuitively, if the invariant measure is concentrated in a small region $D$ (\enquote{thin tails}), it can be  accessed faster by a continuous motion rather than by a jump SDE, since jumps will drive the process quickly away from  $D$. Therefore, if $\rho$ has heavy tails, the jump model should be more efficient. This idea is confirmed in the example below, where we take $\rho(x)= \lambda e^{-\lambda x}$, $x>0$, and construct a L\'evy-driven SDE and a reflecting  Brownian motion, both having $\rho$ as the invariant measure. In both cases the solutions to the SDEs and the corresponding invariant measures are unique: Indeed, in the case of the L\'evy-driven SDE it is possible to pick a L\'evy measure satisfying \eqref{exa-e04}; then by \cite[Thm.~4.1, Thm.~4.2]{SY84} we have a unique invariant distribution, given by \eqref{volt-e06}. In the case of the  reflecting Brownian motion, we have a strictly positive transition probability density (cf.~\eqref{app-e04} below), i.e.\ we have irreducibility and the strong Feller property, hence a unique invariant measure, cf.\ Remark~\ref{volt-03}. Since we have for bigger $\lambda$ a faster decay of $\rho(x)$ as $x\to\infty$, one would expect that for small $\lambda$ the L\'evy-driven SDE will outperform the model given by the reflecting Brownian motion. To prove this guess, we will now compare the speed of convergence for the first moment of the processes.

\bigskip\noindent\textbf{L\'evy-driven SDE.}
Consider the L\'evy-driven SDE \eqref{intro-e14}, where $Z_t$ is a pure-jump L\'evy process with L\'evy measure $\nu(dy) =\rho(y)\,dy$, and $b(x) =-x+\ell$. Then $\overline{\nu}(x)= e^{-\lambda x}$, $x>0$, and \eqref{volt-e06b} holds with $\tilde{b}(x) := b(x) - \ell = -x$. One can also calculate explicitly $\psi(\xi)$, but in this setting it is easier to start directly with the L\'evy measure. Note that in this case $\pi(dx) = \rho(x)\,dx$ is also the invariant measure for the solution to \eqref{intro-e14}. We can solve \eqref{intro-e14} explicitly: 
\begin{gather*}
	X_t= x e^{-t} + \int_0^t e^{-(t-s)}\,dZ_s  
	\quad\text{and}\quad
	X_\infty = \int_0^\infty e^{-s}\,dZ_s\sim \pi. 
\end{gather*}
The moments of $X_t$ can be calculated directly, namely 
\begin{gather}\label{app-e02}
	\Ee X_t - \Ee X_\infty
	= e^{-t}\left(x-\frac{1}{\lambda}\right). 
\end{gather}
In this case, up to a multiplicative constant, the rate of convergence is independent of $\lambda$. 

\bigskip\noindent\textbf{Reflecting Brownian motion.}
Consider now a Langevin diffusion with a  reflecting  Brownian motion. In this case, one may find the expected value of the process $X_t$ via the density function of the process itself, which is given by (cf. \cite{GW18})
\begin{small}
\begin{gather}\label{app-e04}
	p(t,x,y) 
	= \lambda e^{-y\lambda}\Phi\left(\frac{\frac{\lambda t}{2}-x-y}{\sqrt{t}}\right) 
	+ \frac{1}{\sqrt{t}}\varphi\left(\frac{-\frac{\lambda t}{2}+x-y}{\sqrt{t}}\right)
	+ \frac{e^{-y\lambda}}{\sqrt{t}}\varphi\left(\frac{\frac{\lambda t}{2}-x-y}{\sqrt{t}}\right),
\end{gather}\end{small}%
where $\Phi(\cdot)$ and $\varphi(\cdot)$ denote the cumulative distribution function (CDF) and the probability density function of a standard normal random variable, respectively. Therefore, \begin{small}%
\begin{align*}
	\Ee X_t 
	&= \int\limits_0^{\infty} \! y\left[\lambda e^{-y\lambda}\Phi\left(\frac{\frac{\lambda t}{2}-x-y}{\sqrt{t}}\right)+\frac{1}{\sqrt{t}}\varphi\left(\frac{-\frac{\lambda t}{2}+x-y}{\sqrt{t}}\right)+\frac{e^{-y\lambda}}{\sqrt{t}}\varphi\left(\frac{\frac{\lambda t}{2}-x-y}{\sqrt{t}}\right)\right] dy. 
\end{align*}\end{small}%
Integrating by parts and collecting the terms, we derive\begin{small}%
\begin{gather*}
	\Ee X_t 
	= \frac{1}{\lambda} 
	+ \left(x-\frac{1}{\lambda}-\frac{\lambda t}{2}\right) \Phi\left(\frac{x-\frac{\lambda t}{2}}{\sqrt{t}}\right) 
	- \frac{1}{\lambda}e^{x\lambda}\Phi\left(\frac{-\frac{\lambda t}{2}-x}{\sqrt{t}}\right) 
	+ \sqrt{t}\varphi\left(\frac{-\frac{\lambda t}{2}+x}{\sqrt{t}}\right).
\end{gather*}\end{small}%
Set  $-z_{t} := \frac{1}{\sqrt{t}}\left(x-\frac{\lambda t}{2}\right)$. Using Mill's ratio, we obtain the following asymptotics as $t\to \infty$:
\begin{gather*}
	\Ee X_t - \Ee X_\infty  
	=
	\frac{16\varphi(z_t)}{\lambda^3}
	\left[
	\left(\frac{x}{\lambda}-2\right)
	t^{-3/2}+O\left(t^{-5/2}\right)
	\right]
\end{gather*}
or, equivalently, 
\begin{gather}\label{app-e16}
	\Ee X_t - \Ee X_\infty  =\frac{16e^{\frac{\lambda x}{2}} e^{-\frac{\lambda^2 t}{8}} }{\lambda^3\sqrt{2 \pi}} \left[
	\left(\frac{x}{\lambda}-2\right)
	t^{-3/2}+O\left(t^{-5/2}\right)
	\right].
\end{gather}
Observe that in this case the convergence rate depends on $\lambda$; theoretically, the Lévy-driven model \eqref{intro-e14} outperforms the  reflecting  BM model \eqref{intro-e26} for $\lambda < \sqrt{8}$, and vice versa. To illustrate this, denote
\begin{gather*}
	\epsilon(t) := \Ee X_t - \Ee X_\infty, \quad x=2.5.
\end{gather*}
Figure \ref{app-fig} displays the deviations $\epsilon(t)$ from the theoretical values for both models under different values of $\lambda$. The vertical lines indicate the time moments from which onwards each $|\epsilon(t)|$ does not exceed $\delta = 0.01$. Note that the reflecting BM model works significantly worse for small $\lambda$. 
\begin{figure}[!ht]
	\mbox{}\hfill
	\includegraphics[width=.49\textwidth]{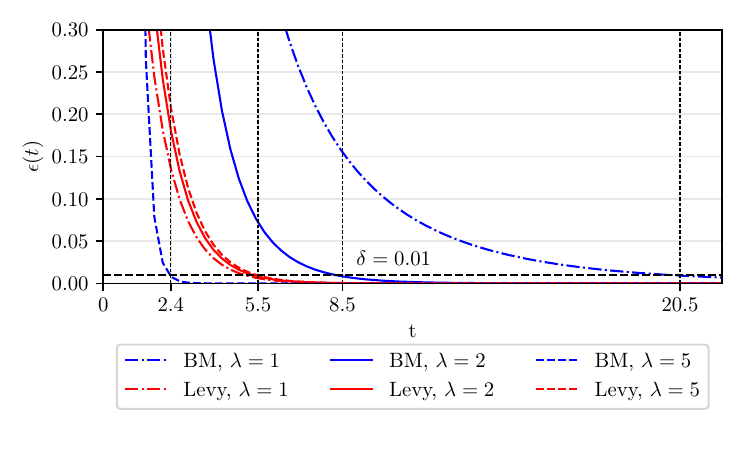}
	\hfill
	\includegraphics[width=.49\textwidth]{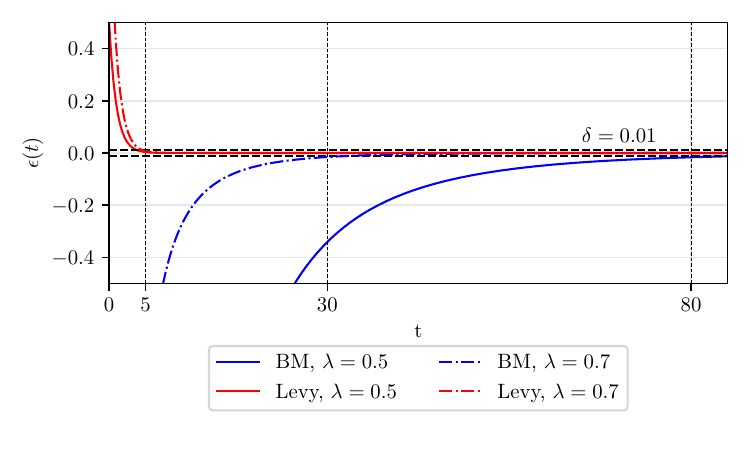}
	\hfill\mbox{}
	\caption{Both graphs illustrate how fast different models (reflecting BM and L\'evy-SDE) converge. The function $\epsilon(t)$ is the difference between the estimated and theoretical mean values.  
	The vertical lines indicate the moment of convergence for each model for different values of parameter $\lambda$. 
	}\label{app-fig}
\end{figure}

\begin{remark}\label{app-07} 
	The relations \eqref{app-e02} and \eqref{app-e16} can be generalized for Lipschitz functionals $\varphi(X_t)$  of $X_t$, and we get the same order for the upper bounds. In particular, we get an upper estimate for the Wasserstein $\mathds{W}_1$-distance between $\Law(X_t)$ and $\Law(X_\infty)$: 
	\begin{gather}
		\mathds{W}_1\left(\Law(X_t), \Law(X_\infty)\right)
		:= \sup_{\varphi\in \mathrm{Lip}_1}\left| \int \varphi\,dP_t - \int \varphi\,dP_\infty \right|, 
	\end{gather}
	where $\mathrm{Lip}_1$ is the set of Lipschitz continuous functions with Lipschitz constant $1$: $|\varphi(x)- \varphi(y)|\leq |x-y|$.   This allows us to conclude which type of approximation of $X_\infty$ is preferable. 
\end{remark} 

\begin{acknowledgement}
	We thank David Berger (TU Dresden) for interesting and helpful discussions. The two first-named authors are gratefully acknowledging financial support through the \emph{DAAD Ostpartnerschaften} programme managed by TU Dresden. 
\end{acknowledgement}

\end{document}